 \theoremstyle{plain}
\newtheorem{theorem}{Theorem}[section]
\newtheorem{lemma}[theorem]{Lemma}
\newtheorem{corollary}[theorem]{Corollary}
\newtheorem{proposition}[theorem]{Proposition}
\theoremstyle{definition}
\theoremstyle{remark}
\numberwithin{equation}{section}
\def\glc{generalized local cohomology }
\def\Ext{\mathrm{Ext}}
\def\Tor{\mathrm{Tor}}
\def\Hom{\mathrm{Hom}}
\def\Ker{\mathrm{Ker}}
\def\Coker{\mathrm{Coker}}
\def\Im{\mathrm{Im}}
\def\pd{\mathrm{pd}}
\def\Supp{\mathrm{Supp}}
\def\Ass{\mathrm{Ass}}
\def\dim{\mathrm{dim}}
\def\P{\mathfrak{p}}
\begin{document}

\title{GENERALIZED IDEAL TRANSFORMS}


\author{TRAN TUAN NAM}
\address{Department of Mathematics-Informatics, Ho Chi Minh University of  Pedagogy, Ho Chi Minh city, Viet Nam.}
\curraddr{}
\email{namtuantran@gmail.com}
\thanks{This research is funded by Vietnam National Foundation for
Science and Technology Development (NAFOSTED)}

\author{NGUYEN MINH TRI}
\address{Department of Natural Science Education, Dong Nai University, Dong Nai, Viet Nam.}
\curraddr{}
\email{triminhng@gmail.com}
\thanks{}

\date{}

\begin{abstract}
We study  basic properties of the generalized ideal transforms
$D_I(M, N)$   and   the set of associated primes of the modules
$R^iD_I(M,N).$
\end{abstract}

\dedicatory{}

\maketitle

\noindent {\it Key words}:   (generalized) local cohomology,
(generalized) ideal transform, associated prime.

\noindent {\it 2000 Mathematics subject classification}: 13D45.
\bigskip

\section{Introduction}\label{intro}

Throughout this paper, $R$ is a  Noetherian commutative ring with
non-zero identity and $I$ is an ideal of $R$. In \cite{broloc},
Brodmann defined  ideal transform $D_I(M)$ of an $R-$module $M$ with
respect to $I$ by $$D_I(M)=\mathop {\lim
}\limits_{\begin{subarray}{c}
   \longrightarrow  \\
  n
\end{subarray}}  \Hom_R(I^{n},M).$$Ideal transforms turn out to be
a powerful tool in various fields of commutative algebra and they
are  closed to local cohomology modules of Grothendieck.

In \cite{herkom}, Herzog introduced the definition of generalized
local cohomology modules which is an extension of local cohomology
modules of Grothendieck. The $i$-th generalized local cohomology
module of modules $M$ and $N$ with respect to $I$ was given as
$$H^i_I(M,N)=\mathop {\lim }\limits_{\begin{subarray}{c}
   \longrightarrow  \\
  n
\end{subarray}}  \Ext^i_R(M/I^{n}M,N).$$
A natural way, we have a   generalization of the ideal transform. In
\cite{divcof},  the generalized ideal transform functor with respect
to an ideal $I$ is defined by
$$D_I(M,-)=\mathop {\lim }\limits_{\begin{subarray}{c}
   \longrightarrow  \\
  n
\end{subarray}}  \Hom_R(I^{n}M,-).$$Also in \cite{divcof} they used the generalized
ideal  transforms  to study the cofiniteness of generalized local cohomology
modules. Let $R^iD_I(M,-)$ denote the $i$-th right derived functor
of $D_I(M,-).$  It is clear that $$R^iD_I(M,-)\cong \mathop {\lim
}\limits_{\begin{subarray}{c}
   \longrightarrow  \\
  n
\end{subarray}}  \Ext^i_R(I^{n}M,-)$$
for all $i\geq 0.$

The organization of our paper is as follows. In the next section we
study basic properties of the generalized ideal transform functor
$D_I(M,-)$ and its right derived functors $R^iD_I(M,-).$ The first
result is Theorem \ref{DNfree} which says that if $M$ is a finitely
generated $R$-module and $N$ is an $I$-torsion $R$-module, then
$R^iD_I(M,N)=0$ for all $i\geq 0.$ Next, Theorem \ref{DiHom} gives
us isomorphisms $D_I(\Hom_R(M,N))\cong D_I(M,N)$ and
$D_{aR}(M,N)\cong D_{aR}(M,N)_a.$ In Theorem \ref{HomFi} we see that
the module $\Hom_R(R/I, R^tD_I(M,N))$ is a finitely generated
$R$-module provided the modules $R^iD_I(M,N)$ are finitely generated
for all $i<t$. The section is closed by Theorem
\ref{T:tinhatingenide} which shows the Artinianness of the modules
$R^i D_I(M,N).$

The last section is devoted to study   the set of associated primes
$\Ass(R^iD_I(M,N)).$ Theorem \ref{wLGLC} shows that if  $M$ is a
finitely generated $R$-module and $N$ is a weakly Laskerian
$R$-module, then  $\Supp(H^i_{\mathfrak{m}}(M,N))$ and
$\Ass(R^iD_{\mathfrak{m}}(M,N))$  are  finite sets for all $i\geq
0.$ Finally, Theorem \ref{spectraland} gives us two interesting
consequences about the finiteness of the sets $\Ass(R^tD_I(M,N))$
(Corollary \ref{C:3.4}) and $\Supp_R(R^tD_I(M,N))$ (Corollary
\ref{C:hqsupphh}).

\bigskip

\section{Some basic properties of generalized ideal transforms}

An $R-$module $N$ is called $I$-torsion if $\Gamma_I(N)\cong N.$  We
have the first following result.

\begin{theorem}\label{DNfree}
Let $M$ be a finitely generated $R$-module and $N$ an $I$-torsion
$R$-module. Then $R^iD_I(M,N)=0$ for all $i\geq 0$.
\end{theorem}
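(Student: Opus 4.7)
The plan is to pass to a conveniently chosen injective resolution of $N$. First I would dispose of the case $i=0$ directly. If $\phi\in\Hom_R(I^nM,N)$, then $\phi(I^nM)$ is a finitely generated submodule of the $I$-torsion module $N$, hence annihilated by some power $I^s$. Any $x\in I^{n+s}M=I^s\cdot I^nM$ can be written as $x=\sum_j a_jy_j$ with $a_j\in I^s$ and $y_j\in I^nM$, so $\phi(x)=\sum_j a_j\phi(y_j)\in I^s\phi(I^nM)=0$. Thus the restriction of $\phi$ to $I^{n+s}M$ vanishes, so $\phi$ becomes zero in the direct limit and $D_I(M,N)=0$. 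The same argument applies to any $I$-torsion $R$-module in place of $N$.

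For $i\geq 1$, the idea is to build an injective resolution of $N$ whose terms are themselves $I$-torsion. Starting from an arbitrary injective resolution $0\to N\to E^{\bullet}$ of $R$-modules, I would apply $\Gamma_I(-)$ term by term. Because $R$ is Noetherian, $\Gamma_I(E^j)$ is again injective for each $j$. The resulting complex $0\to N\to\Gamma_I(E^{\bullet})$ is still a resolution of $N$: indeed $H^0(\Gamma_I(E^{\bullet}))=\Gamma_I(N)=N$, while $H^i(\Gamma_I(E^{\bullet}))=H^i_I(N)=0$ for $i\geq 1$ since $N$ is $I$-torsion. So we obtain an injective resolution of $N$ whose every term is $I$-torsion.

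Computing $R^iD_I(M,N)$ from this resolution, $R^iD_I(M,N)=H^i\bigl(D_I(M,\Gamma_I(E^{\bullet}))\bigr)$. By the first paragraph, $D_I(M,\Gamma_I(E^j))=0$ for each $j$, so the whole complex is zero and $R^iD_I(M,N)=0$ for every $i\geq 0$.

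The only nontrivial ingredient is the standard Noetherian fact that the $I$-torsion part of an injective module is again injective; everything else is formal. An alternative would be to try to show directly, by induction on $i$, that every class in $\Ext^i_R(I^nM,N)$ eventually dies under some transition map $\Ext^i_R(I^nM,N)\to\Ext^i_R(I^{n+k}M,N)$, but that route demands more bookkeeping and obscures the conceptual point.
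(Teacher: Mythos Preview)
Your argument is correct and follows essentially the same approach as the paper: first show that any $\phi\in\Hom_R(I^nM,N)$ dies under a transition map (the paper phrases this via the explicit direct-limit construction, but the content is identical), and then pass to an injective resolution of $N$ consisting of $I$-torsion modules to kill all higher $R^iD_I(M,-)$. The only cosmetic difference is that you explicitly construct the $I$-torsion injective resolution by applying $\Gamma_I$ to an arbitrary one, whereas the paper simply invokes its existence.
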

\begin{proof}
We first prove $D_I(M,N)=0$.

Consider the $n$-th injection $\lambda_n: \Hom_R(I^nM, N)\to
\bigoplus \Hom_R(I^nM,N)$   and the homomorphisms
 $\varphi^i_j: \Hom_R(I^iM, N)\to \Hom_R(I^jM,N)$ such that $\varphi^i_j(f_i)=f_i|_{I^jM}$ for all $i\leq j$.

Let $S$ be an $R$-submodule of $\bigoplus \Hom_R(I^nM,N)$ which is
generated by elements $\lambda_j\varphi^i_j(f_i)-\lambda_if_i,
 f_i\in \Hom_R(I^iM,N)$ and $i\leq j.$
 Then
 $$\mathop {\lim }\limits_{\begin{subarray}{c}
   \longrightarrow  \\
  n
\end{subarray}}  \Hom_R(I^nM,N)=\big(\bigoplus \Hom_R(I^nM,N)\big)/S.$$

For any $u\in D_I(M,N)=\mathop {\lim }\limits_{\begin{subarray}{c}
   \longrightarrow  \\
  n
\end{subarray}}  \Hom_R(I^{n}M,N),$ we have $u=\lambda_tf_t+S$, where $f_t\in \Hom_R(I^tM,N)$.

Since $I^tM$ is a finitely generated $R$-module and $N$ is an
$I$-torsion $R$-module, there exists a positive integer $p$ such
that $\varphi^t_{p+t}(f_t)=0$.

It follows from \cite[2.17 (ii)]{Rotman} that $u=0$ and then
$D_I(M,N)=0$.

The proof will be complete if we show $R^iD_I(M,N)=0$ for all $i>0$.

As $N$ is $I$-torsion, there is an injective resolution
$E^{\bullet}$  of $N$ such that each term of the resolution is an
$I$-torsion $R$-module.
 By the above proof, we have $\mathop {\lim }\limits_{\begin{subarray}{c}
   \longrightarrow  \\
  n
\end{subarray}}  \Hom_R(I^{n}M,E^i)=0$ for all $i\geq 0.$
Therefore $R^iD_I(M,N)=0$ for all $i\geq 0$.
\end{proof}

\begin{corollary}\label{235}
Let $M$ be a finitely generated $R$-module and $N$ an $R$-module
such that $D_I(N)=0$. Then $R^iD_I(M,N)=0$ for all  $i\geq 0$.
\end{corollary}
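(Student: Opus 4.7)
The plan is to reduce the claim to Theorem \ref{DNfree} by showing that the hypothesis $D_I(N)=0$ already forces $N$ to be $I$-torsion.

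First I would recall the standard four-term exact sequence
$$0 \to \Gamma_I(N) \to N \to D_I(N) \to H^1_I(N) \to 0,$$
which is obtained by applying $\Hom_R(-,N)$ to the short exact sequences $0\to I^n\to R\to R/I^n\to 0$ and passing to the direct limit over $n$ (direct limits being exact over a Noetherian ring, and $\Gamma_I(N)=\mathop{\lim}\limits_{\longrightarrow}\Hom_R(R/I^n,N)$). The kernel of the canonical map $N\to D_I(N)$ in this sequence is precisely $\Gamma_I(N)$.

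From the hypothesis $D_I(N)=0$, the map $N\to D_I(N)$ is the zero map, so its kernel equals all of $N$; that is, $N=\Gamma_I(N)$. Hence $N$ is an $I$-torsion $R$-module.

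Finally, I would invoke Theorem \ref{DNfree} with the same $M$ and $N$: since $M$ is finitely generated and $N$ is $I$-torsion, it yields $R^iD_I(M,N)=0$ for all $i\geq 0$, completing the proof. There is essentially no obstacle here beyond exhibiting the standard exact sequence connecting $\Gamma_I$, the identity, $D_I$, and $H^1_I$; once that is in place the corollary is immediate from the previous theorem.
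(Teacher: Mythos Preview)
Your proof is correct and follows exactly the same route as the paper: use the exact sequence $0\to\Gamma_I(N)\to N\to D_I(N)\to H^1_I(N)\to 0$ together with the hypothesis $D_I(N)=0$ to conclude that $N=\Gamma_I(N)$ is $I$-torsion, and then apply Theorem~\ref{DNfree}.
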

\begin{proof}
We consider the exact sequence
$$0\to \Gamma_I(N)\to N\to D_I(N)\to H_I^1(N)\to 0$$
From the hypothesis,  we have $\Gamma_I(N)\cong N$ that means $N$ is
$I$-torsion. From \ref{DNfree} we have the conclusion.
\end{proof}

The following lemmas  will be used to prove the next propositions.

\begin{lemma} {\rm ({\cite[2.2]{divcof}})}\label{lemma1}
Let $M,N$ be $R$-modules. Then, there is an exact sequence
$$0\to H_I^0(M,N)\to \Hom_R(M,N)\to D_I(M,N)\to H_I^1(M,N)\to\cdots$$
$$\cdots H_I^i(M,N)\to \Ext_R^i(M,N)\to R^iD_I(M,N)\to H_I^{i+1}(M,N)\to\cdots$$
Moreover, if $\pd(M)<\infty$,  then $R^iD_I(M,N)\cong
H_I^{i+1}(M,N)$ for all $i\geq \pd(M)+1$.
\end{lemma}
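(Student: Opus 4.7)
The plan is to obtain the long exact sequence by taking a direct limit of the long exact sequences of $\Ext$ coming from the short exact sequences
\[
0\to I^{n}M\to M\to M/I^{n}M\to 0
\]
for each $n$. I would apply the contravariant functor $\Hom_{R}(-,N)$ to this short exact sequence to produce, for every $n$, a long exact sequence
\[
0\to \Hom_{R}(M/I^{n}M,N)\to \Hom_{R}(M,N)\to \Hom_{R}(I^{n}M,N)\to \Ext^{1}_{R}(M/I^{n}M,N)\to\cdots
\]
running through all higher $\Ext$ groups. The inclusions $I^{n+1}M\hookrightarrow I^{n}M$ induce a compatible directed system of such long exact sequences; the connecting maps on the $\Hom_{R}(M,N)$ and $\Ext^{i}_{R}(M,N)$ columns are the identity, and functoriality of the snake/connecting homomorphism guarantees the compatibility of the whole diagram.

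Next, since direct limits over a directed set are exact in the category of $R$-modules, passing to $\varinjlim_{n}$ preserves exactness of the sequence. The limit terms are by definition
\[
\varinjlim_{n}\Hom_{R}(M/I^{n}M,N)=H^{0}_{I}(M,N),\qquad \varinjlim_{n}\Ext^{i}_{R}(M/I^{n}M,N)=H^{i}_{I}(M,N),
\]
\[
\varinjlim_{n}\Hom_{R}(I^{n}M,N)=D_{I}(M,N),\qquad \varinjlim_{n}\Ext^{i}_{R}(I^{n}M,N)=R^{i}D_{I}(M,N),
\]
where the last identification uses the definition $R^{i}D_{I}(M,-)\cong \varinjlim_{n}\Ext^{i}_{R}(I^{n}M,-)$ recalled in the introduction. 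The columns involving $\Hom_{R}(M,N)$ and $\Ext^{i}_{R}(M,N)$ are constant in $n$, so their limits are themselves. Reading off the limit sequence gives exactly the claimed six-term-continued sequence.

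For the final assertion, suppose $\pd(M)=d<\infty$. Then $\Ext^{i}_{R}(M,N)=0$ whenever $i>d$. For every $i\geq d+1$, the relevant four consecutive terms in the long exact sequence become
\[
0=\Ext^{i}_{R}(M,N)\to R^{i}D_{I}(M,N)\to H^{i+1}_{I}(M,N)\to \Ext^{i+1}_{R}(M,N)=0,
\]
which immediately yields the isomorphism $R^{i}D_{I}(M,N)\cong H^{i+1}_{I}(M,N)$.

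I do not foresee a serious obstacle: the argument is a standard derived-functor bookkeeping exercise. The only mildly delicate point is the verification that the system of long exact sequences is indeed a direct system (so that taking $\varinjlim$ produces an exact sequence rather than merely a sequence of modules); this reduces to the naturality of $\Ext$ with respect to the morphisms $I^{n+1}M\hookrightarrow I^{n}M$ in the first argument and the identity morphism on the other modules, and so follows from the functoriality of the $\delta$-functor $\Ext^{\bullet}_{R}(-,N)$.
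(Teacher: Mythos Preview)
Your argument is correct and is precisely the standard derivation of this long exact sequence: apply $\Hom_R(-,N)$ to the short exact sequences $0\to I^nM\to M\to M/I^nM\to 0$, then pass to the direct limit and use exactness of $\varinjlim$ together with the identifications of the limit terms. The deduction of the isomorphism $R^iD_I(M,N)\cong H^{i+1}_I(M,N)$ for $i\geq \pd(M)+1$ from the vanishing of $\Ext^i_R(M,N)$ and $\Ext^{i+1}_R(M,N)$ is also correct.

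Note, however, that the paper does not supply its own proof of this lemma: it is quoted verbatim from \cite[2.2]{divcof} and used as a black box. So there is no ``paper's proof'' to compare against; your write-up is essentially the argument one would find in the cited source.
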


\begin{lemma} {\rm(\cite[Theorem 1]{KSB})}\label{preDir}
The following conditions on an $R$-module $M$ are equivalent:
\begin{enumerate}
\item[(i)] $M$ admits a resolution by finitely generated
projectives;
\item[(ii)] The functors $\Ext^n_R(M,-)$ preserve direct limits for all $n;$
\item[(iii)] The functors $\Tor^R_n(-,M)$ preserve products for all $n.$
\end{enumerate}
\end{lemma}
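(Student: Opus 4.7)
The plan is to prove (i)$\Rightarrow$(ii) and (i)$\Rightarrow$(iii) directly from the definitions of $\Ext$ and $\Tor$, and then close the circle by establishing the nontrivial converse (ii)$\Rightarrow$(i); the implication (iii)$\Rightarrow$(i) runs in parallel, with $\Hom$ replaced by tensor product and direct limits by products.

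For (i)$\Rightarrow$(ii), let $P_\bullet\to M$ be a resolution by finitely generated projectives, so $\Ext^n_R(M,-) = H^n(\Hom_R(P_\bullet,-))$. For any finitely generated projective $P$, write $P\oplus Q\cong R^k$; then $\Hom_R(P,-)$ is a direct summand of $\Hom_R(R^k,-)\cong (-)^k$ and therefore preserves direct limits. Since filtered direct limits of $R$-modules are exact, they commute with the cohomology of a cochain complex, yielding (ii). The argument for (i)$\Rightarrow$(iii) is formally identical, using $\Tor^R_n(-,M)=H_n(-\otimes_R P_\bullet)$, the fact that $-\otimes_R R^k=(-)^k$ preserves arbitrary products (and hence so does $-\otimes_R P$ for any finitely generated projective $P$), and the exactness of products in the category of $R$-modules.

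The essential content is (ii)$\Rightarrow$(i). The case $n=0$ of (ii) asserts that $\Hom_R(M,-)$ preserves direct limits; by a classical result, this characterizes $M$ as a finitely presented module. Hence there is a short exact sequence $0\to K_1\to F_0\to M\to 0$ with $F_0$ free of finite rank and $K_1$ finitely generated. The associated four-term exact sequence
$$0\to\Hom_R(M,-)\to\Hom_R(F_0,-)\to\Hom_R(K_1,-)\to\Ext^1_R(M,-)\to 0$$
together with the dimension-shifting isomorphisms $\Ext^n_R(K_1,-)\cong\Ext^{n+1}_R(M,-)$ for $n\geq 1$ shows, via exactness of direct limits and the five lemma, that $K_1$ itself satisfies hypothesis (ii). Iterating, one constructs inductively a resolution $\cdots\to F_2\to F_1\to F_0\to M\to 0$ by finitely generated free $R$-modules, establishing (i).

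I expect the main obstacle to be the $n=0$ step of this converse, which is not a formal diagram chase but requires the Lazard-type characterization of finitely presented modules via preservation of filtered colimits by $\Hom_R(M,-)$; once finite presentation of $M$ is secured, propagating the hypothesis to each successive syzygy is routine. For (iii)$\Rightarrow$(i) one substitutes Lenzing's theorem that $-\otimes_R M$ preserves products if and only if $M$ is finitely presented, and the remainder of the induction proceeds identically, dualising $\Hom$ and direct limits to $\otimes$ and products throughout.
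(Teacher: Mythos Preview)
The paper does not supply a proof of this lemma at all; it is recorded purely as a citation to Brown's theorem \cite[Theorem~1]{KSB}, so there is no in-paper argument to compare against. Your sketch is correct and is in fact the standard route (and essentially Brown's own): the implications out of (i) are formal, while the converses hinge on the characterization of finitely presented modules as exactly those $M$ for which $\Hom_R(M,-)$ preserves filtered colimits (respectively, for which $-\otimes_R M$ preserves products, Lenzing), after which dimension-shifting propagates the hypothesis to each syzygy. One small point worth making explicit in your write-up of (ii)$\Rightarrow$(i): having shown $K_1$ again satisfies (ii), you should state that the $n=0$ case then forces $K_1$ itself to be finitely presented (not merely finitely generated), which is what licenses choosing $F_1$ of finite rank with finitely generated kernel at the next step; as written this is implicit but could be read as only using finite generation of $K_1$.
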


The following lemma shows some basic properties of generalized ideal
transforms that we shall use.

\begin{lemma}\label{RiDI}
Let $M$ be a finitely generated $R$-module and $N$ an $R$-module.
Then
\begin{enumerate}
\item[(i)] $D_I(M,N)$ is an $I$-torsion-free $R$-module;
\item[(ii)] $R^iD_I(M,N)\cong R^iD_I(M,N/\Gamma_I(N)) \text{ for all } i\geq
0$;
\item[(iii)] $R^iD_I(M,N)\cong R^iD_I(M,D_I(N)) \text{ for all } i\geq
0$;
\item[(iv)] $D_I(D_I(M,N))\cong D_I(M,N)$;
\item[(v)] $D_I(\Hom_R(M,N))\cong \Hom_R(M,D_I(N))$.
\end{enumerate}
\end{lemma}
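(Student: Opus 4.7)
The plan is to handle the five parts in the order (v), (ii), (iii), (iv), (i), which reflects their logical dependencies rather than their order in the statement. Part (v) is a pure adjunction computation, (ii) and (iii) are applications of Theorem \ref{DNfree}, (iv) combines (v) with (iii), and (i) then follows either directly or as a corollary of (iv). Since $M$ is finitely generated over the Noetherian ring $R$, it is finitely presented, so $\Hom_R(M,-)$ commutes with direct limits, and this finite presentability is the key input throughout.

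For (v), starting from $\Hom_R(M,D_I(N))=\Hom_R\bigl(M,\varinjlim_n\Hom_R(I^n,N)\bigr)$, I pull the limit out of the first argument of $\Hom_R(M,-)$ and apply the tensor--hom adjunction $\Hom_R(M,\Hom_R(I^n,N))\cong\Hom_R(I^n\otimes_R M,N)\cong\Hom_R(I^n,\Hom_R(M,N))$, which on passing to the direct limit recovers $D_I(\Hom_R(M,N))$.

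For (ii), I apply the long exact sequence of right derived functors of $D_I(M,-)$ to $0\to\Gamma_I(N)\to N\to N/\Gamma_I(N)\to 0$; Theorem \ref{DNfree} forces $R^iD_I(M,\Gamma_I(N))=0$ for all $i$, and the connecting maps collapse the sequence to $R^iD_I(M,N)\cong R^iD_I(M,N/\Gamma_I(N))$. For (iii) I combine (ii) with the short exact sequence $0\to N/\Gamma_I(N)\to D_I(N)\to H_I^1(N)\to 0$ extracted from the classical four-term exact sequence; since $H_I^1(N)$ is $I$-torsion, Theorem \ref{DNfree} again annihilates $R^iD_I(M,H_I^1(N))$, so the long exact sequence yields $R^iD_I(M,N/\Gamma_I(N))\cong R^iD_I(M,D_I(N))$, which combined with (ii) gives (iii).

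For (iv), I chain $D_I(M,N)\cong D_I(M,D_I(N))$ (the $i=0$ case of (iii)) with $D_I(M,D_I(N))=\varinjlim_k\Hom_R(I^kM,D_I(N))\cong\varinjlim_k D_I(\Hom_R(I^kM,N))$, obtained by applying (v) to the finitely generated module $I^kM$, and finally with $\varinjlim_k D_I(\Hom_R(I^kM,N))=D_I\bigl(\varinjlim_k\Hom_R(I^kM,N)\bigr)=D_I(D_I(M,N))$, where I have commuted the ordinary $D_I$ past $\varinjlim_k$ using that each $I^n$ is finitely presented. Part (i) is then immediate from (iv) and the classical fact that $D_I$ applied to any $R$-module is $I$-torsion-free; alternatively, one can prove it directly in the spirit of Theorem \ref{DNfree} by representing $u\in\Gamma_I(D_I(M,N))$ with $I^su=0$ by some $f_t\in\Hom_R(I^tM,N)$, using finite generation of $I^s$ to obtain a common $p$ with $I^s\cdot f_t(I^{t+p}M)=0$, and concluding that $f_t|_{I^{s+t+p}M}=0$. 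The only delicate point is the double-limit manipulation in (iv), whose legitimacy rests entirely on the finite presentability of $I^n$; everything else is routine derived-functor bookkeeping atop Theorem \ref{DNfree}.
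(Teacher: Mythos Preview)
Your proof is correct and uses the same ingredients as the paper: tensor--hom adjunction, the fact that $\Hom_R(P,-)$ commutes with direct limits for finitely presented $P$, Theorem~\ref{DNfree}, and the two short exact sequences $0\to\Gamma_I(N)\to N\to N/\Gamma_I(N)\to 0$ and $0\to N/\Gamma_I(N)\to D_I(N)\to H_I^1(N)\to 0$. The only difference is organizational: the paper proceeds in the order (i)--(v), establishing (i) by a direct double-limit computation $\Gamma_I(D_I(M,N))\cong D_I(M,\Gamma_I(N))=0$ that parallels its proof of (iv), whereas you reorder to (v), (ii), (iii), (iv), (i) and obtain (i) as a corollary of (iv); this avoids duplicating the double-limit calculation and is arguably cleaner, but the mathematical content is identical.
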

\begin{proof}
(i) We have by \ref{preDir}

\begin{tabular}{rl}
$\Gamma_I(D_I(M,N))$&$=\mathop {\lim }\limits_{\begin{subarray}{c}
   \longrightarrow  \\
  n
\end{subarray}} \Hom_R(R/I^n,D_I(M,N))$\\

&$\cong \mathop {\lim }\limits_{\begin{subarray}{c}
   \longrightarrow  \\
  n
\end{subarray}} \mathop {\lim }\limits_{\begin{subarray}{c}
   \longrightarrow  \\
  t
\end{subarray}} \Hom_R(R/I^n,\Hom_R(I^tM,N))$\\

&$\cong \mathop {\lim }\limits_{\begin{subarray}{c}
   \longrightarrow  \\
  n
\end{subarray}} \mathop {\lim }\limits_{\begin{subarray}{c}
   \longrightarrow  \\
  t
\end{subarray}} \Hom_R(R/I^n\otimes I^tM,N)$\\

&$\cong \mathop {\lim }\limits_{\begin{subarray}{c}
   \longrightarrow  \\
  t
\end{subarray}} \mathop {\lim }\limits_{\begin{subarray}{c}
   \longrightarrow  \\
  n
\end{subarray}} \Hom_R(I^tM,\Hom_R(R/I^n,N))$\\

&$\cong \mathop {\lim }\limits_{\begin{subarray}{c}
   \longrightarrow  \\
  t
\end{subarray}} \Hom_R(I^tM,\Gamma_I(N))$\\
&$\cong D_I(M,\Gamma_I(N)).$

\end{tabular}

Since $\Gamma_I(N)$ is an $I$-torsion $R$-module and from
\ref{DNfree}, we get $D_I(M,\Gamma_I(N))=0$. Thus $D_I(M,N)$ is an
$I$-torsion-free $R$-module.

(ii) The short exact sequence
$$0\to \Gamma_I(N)\to N\to N/\Gamma_I(N)\to 0$$
deduces the long exact sequence
$$0\to D_I(M,\Gamma_I(N))\to D_I(M,N)\to D_I(M,N/\Gamma_I(N))\to \cdots$$
$$\cdots\to R^iD_I(M,N)\to R^iD_I(M,N/\Gamma_I(N))\to R^{i+1}D_I(M,\Gamma_I(N))\cdots$$

Then $R^iD_I(M,N)\cong R^iD_I(M,N/\Gamma_I(N))$ for all $i\geq 0,$
as $R^iD_I(M,\Gamma_I(N))=0$.

(iii) The short exact sequence
$$0\to N/\Gamma_I(N)\to D_I(N)\to H^1_I(N)\to 0$$
deduces a long exact sequence
$$0\to D_I(M,N/\Gamma_I(N))\to D_I(M,D_I(N))\to D_I(M,H^1_I(N))\to\cdots$$
$$\to R^iD_I(M,N/\Gamma_I(N))\to R^iD_I(M,D_I(N))\to R^iD_I(M,H^1_I(N))\to$$

As $R^iD_I(M,H^1_I(N))=0$,  $R^iD_I(M,N/\Gamma_I(N))\cong
R^iD_I(M,D_I(N))$ for all $i\geq 0$.

(iv) We have

\begin{tabular}{rl}
$D_I(D_I(M,N))$&$=\mathop {\lim }\limits_{\begin{subarray}{c}
   \longrightarrow  \\
  n
\end{subarray}} \Hom_R(I^n,D_I(M,N))$\\

&$\cong \mathop {\lim }\limits_{\begin{subarray}{c}
   \longrightarrow  \\
  n
\end{subarray}} \mathop {\lim }\limits_{\begin{subarray}{c}
   \longrightarrow  \\
  t
\end{subarray}} \Hom_R(I^n,\Hom_R(I^tM,N))$\\

&$\cong \mathop {\lim }\limits_{\begin{subarray}{c}
   \longrightarrow  \\
  n
\end{subarray}} \mathop {\lim }\limits_{\begin{subarray}{c}
   \longrightarrow  \\
  t
\end{subarray}} \Hom_R(I^n\otimes I^tM,N)$\\

&$\cong \mathop {\lim }\limits_{\begin{subarray}{c}
   \longrightarrow  \\
  t
\end{subarray}} \mathop {\lim }\limits_{\begin{subarray}{c}
   \longrightarrow  \\
  n
\end{subarray}} \Hom_R(I^tM,\Hom_R(I^n,N))$\\
&$\cong \mathop {\lim }\limits_{\begin{subarray}{c}
   \longrightarrow  \\
  t
\end{subarray}} \Hom_R(I^tM,D_I(N))$\\
&$\cong D_I(M,D_I(N))\cong D_I(M,N).$
\end{tabular}

(v) From \ref{preDir} we have

\begin{tabular}{rl}
$D_I(\Hom_R(M,N))$&$=\mathop {\lim }\limits_{\begin{subarray}{c}
   \longrightarrow  \\
  n
\end{subarray}} Hom_R(I^n,\Hom_R(M,N))$\\

&$\cong \mathop {\lim }\limits_{\begin{subarray}{c}
   \longrightarrow  \\
  n
\end{subarray}} \Hom_R(M,\Hom_R(I^n,N))$\\

&$\cong \Hom_R(M,D_I(N))$
\end{tabular}

as required.
\end{proof}

If $f:N\to N^{'}$ is an $R$-module homomorphism such that $\Ker f$
and $\Coker f$ are both $I$-torsion $R$-modules, then
$R^iD_I(N)\cong R^iD_I(N^{'})$ for all $i\geq 0$ (see
\cite{broloc}). We have a similar property in the case of
generalized ideal transforms.
\begin{proposition}\label{2318}
Let $f: N\to N^{'}$ be an $R$-module homomorphism such that $\Ker f$
and $\Coker f$ are both $I$-torsion $R$-modules. Then
$$R^iD_I(M,N)\cong R^iD_I(M,N^{'})$$
for all non-negative integer $i$.
\end{proposition}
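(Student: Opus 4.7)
The plan is to reduce the claim to two applications of Theorem~\ref{DNfree} by factoring $f$ through its image. Let $K = \Ker f$, $C = \Coker f$, and $L = \Im f$; by hypothesis $K$ and $C$ are $I$-torsion. The factorization of $f$ yields two short exact sequences
$$0 \to K \to N \to L \to 0, \qquad 0 \to L \to N' \to C \to 0.$$

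Since $D_I(M,-) = \varinjlim \Hom_R(I^n M,-)$ is a left exact functor (as a filtered colimit of left exact functors), the family $\{R^i D_I(M,-)\}_{i\geq 0}$ forms a cohomological $\delta$-functor, and each short exact sequence above produces a long exact sequence in $R^i D_I(M,-)$. Applying Theorem~\ref{DNfree} to the $I$-torsion modules $K$ and $C$ gives
$$R^i D_I(M,K) = 0 = R^i D_I(M,C) \quad \text{for all } i \geq 0.$$
The long exact sequence associated to $0 \to K \to N \to L \to 0$ therefore collapses to isomorphisms $R^i D_I(M,N) \cong R^i D_I(M,L)$ for every $i \geq 0$, and the long exact sequence associated to $0 \to L \to N' \to C \to 0$ collapses to isomorphisms $R^i D_I(M,L) \cong R^i D_I(M,N')$ for every $i \geq 0$. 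Composing these yields the desired isomorphism.

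There is no real obstacle beyond verifying the long exact sequence machinery, which is standard since $D_I(M,-)$ is left exact on $R$-modules. The only point worth checking carefully is that Theorem~\ref{DNfree} applies to arbitrary (not necessarily finitely generated) $I$-torsion modules, which it does as stated.
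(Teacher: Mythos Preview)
Your proof is correct and follows essentially the same approach as the paper: factor $f$ through its image, obtain the two short exact sequences, apply Theorem~\ref{DNfree} to kill the $I$-torsion terms, and read off the isomorphisms from the resulting long exact sequences. The only differences are cosmetic---you spell out why the long exact sequences exist and flag the hypothesis needed on $N$ in Theorem~\ref{DNfree}, whereas the paper simply invokes the vanishing.
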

\begin{proof}

Two short exact sequences
$$0\to \text{Ker} f \to N \to \text{Im} f\to 0$$
$$0\to \text{Im}f \to N^{'} \to \text{Coker}f \to 0$$
deduce two long exact sequences
$$0\to D_I(M,\text{Ker} f) \to D_I(M,N)\to D_I(M,\text{Im} f)\to R^1D_I(M,\text{Ker}f) \cdots$$
$$0\to D_I(M,\text{Im} f) \to D_I(M,N^{'}) \to D_I(M,\text{Coker}f)\to R^1D_I(M,\text{Im}f) \cdots$$
Since $\Ker f$ and $\Coker f$ are both $I$-torsion $R$-modules,
$R^iD_I(M,\text{Ker}f)=0$ and $R^iD_I(M,\text{Coker}f)=0$ for all
$i\geq 0$. Hence $R^iD_I(M,N)\cong R^iD_I(M,\text{Im}f)$ and
$R^iD_I(M,\text{Im}f)\cong R^iD_I(M,N^{'})$. Finally, we get
$R^iD_I(M,N)\cong R^iD_I(M,N^{'})$ for all $i\geq 0$.
\end{proof}

Let $N_a$ denote the localization of $N$ respect to the
multiplicatively closed subset $S=\{a^{i}\mid i\in\mathbb{N}\}$. We
have the following theorem.

\begin{theorem}\label{DiHom}
Let $M$ be a finitely generated $R$-module and $N$ an $R$-module. Then
\begin{itemize}
\item[(i)] $D_I(\Hom_R(M,N))\cong D_I(M,N);$
\item[(ii)] If $I=aR$ is a principal
ideal of $R,$ then
 $$D_{aR}(M,N)\cong D_{aR}(M,N)_a.$$
\end{itemize}
\end{theorem}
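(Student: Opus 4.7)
The plan is to obtain (i) by combining Lemma~\ref{RiDI}(v) with a vanishing argument in the four-term sequence of Lemma~\ref{lemma1}, and to deduce (ii) from (i) using the classical fact that $D_{aR}(K)\cong K_a$ for a principal ideal $aR$ and an arbitrary $R$-module $K$. A direct tensor-hom comparison between $\varinjlim_n\Hom_R(I^n\otimes_R M,N)$ and $\varinjlim_n\Hom_R(I^nM,N)$ looks painful because the kernels of the multiplication maps $I^n\otimes_R M\to I^nM$ do not stabilize in any useful way; routing through $D_I(N)$ is cleaner.

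For (i), Lemma~\ref{RiDI}(v) already yields $D_I(\Hom_R(M,N))\cong\Hom_R(M,D_I(N))$, so the content of (i) is the identification $\Hom_R(M,D_I(N))\cong D_I(M,N)$. I would produce it by applying Lemma~\ref{lemma1} to the pair $(M,D_I(N))$, obtaining
\[
0\to H^0_I(M,D_I(N))\to\Hom_R(M,D_I(N))\to D_I(M,D_I(N))\to H^1_I(M,D_I(N)),
\]
identifying the third term with $D_I(M,N)$ via Lemma~\ref{RiDI}(iii) at $i=0$, and showing that the two flanking $H^i_I$ terms vanish. The vanishing of $H^0_I(M,D_I(N))$ is formal: this module equals $\Gamma_I(\Hom_R(M,D_I(N)))$, and since $M$ is finitely generated while $D_I(N)$ is $I$-torsion-free by Lemma~\ref{RiDI}(i) (applied with $M=R$), any $I$-torsion homomorphism $M\to D_I(N)$ has image inside $\Gamma_I(D_I(N))=0$.

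The second vanishing, $H^1_I(M,D_I(N))=0$, is the crux. I would first invoke the classical vanishings $H^0_I(D_I(N))=H^1_I(D_I(N))=0$, which follow from Lemma~\ref{lemma1} at $M=R$ together with Lemma~\ref{RiDI}(iv) at $M=R$, since $D_I(N)$ is its own $D_I$-transform. Given an injective resolution $0\to D_I(N)\to E^\bullet$, these two vanishings translate exactly into exactness of the truncated complex $0\to\Gamma_I(E^0)\to\Gamma_I(E^1)\to\Gamma_I(E^2)$. Applying the left-exact functor $\Hom_R(M,-)$ preserves this exactness; and since $M$ is finitely generated, the identifications
\[
\Hom_R(M,\Gamma_I(E^i))\cong\Gamma_I(\Hom_R(M,E^i))\cong\varinjlim_n\Hom_R(M/I^nM,E^i)
\]
show that the cohomology of the resulting complex is $H^i_I(M,D_I(N))$ and vanishes in degrees $0$ and $1$.

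For (ii), I would use (i) to rewrite $D_{aR}(M,N)\cong D_{aR}(\Hom_R(M,N))$ and then invoke the classical principal-ideal result from \cite{broloc} that $D_{aR}(K)\cong K_a$ for every $R$-module $K$; in particular $D_{aR}(K)\cong D_{aR}(K)_a$. Localizing the isomorphism from (i) at $a$ and chaining gives
\[
D_{aR}(M,N)_a\cong D_{aR}(\Hom_R(M,N))_a\cong D_{aR}(\Hom_R(M,N))\cong D_{aR}(M,N),
\]
as required. The hardest single step throughout is the vanishing $H^1_I(M,D_I(N))=0$: it is the one place where the argument is not formally forced by the already-proved lemmas and genuinely exploits both the structural fact $H^1_I(D_I(N))=0$ and the finite generation of $M$ through the injective-resolution argument above.
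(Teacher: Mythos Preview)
Your argument is correct, but it takes a noticeably more circuitous path than the paper's for part~(i). The paper does not pass through $\Hom_R(M,D_I(N))$ at all: it simply reads off from Lemma~\ref{lemma1} that the natural map $\Hom_R(M,N)\to D_I(M,N)$ has $I$-torsion kernel $\Gamma_I(M,N)$ and $I$-torsion cokernel $\Im f\subseteq H^1_I(M,N)$, applies Proposition~\ref{2318} (with $M=R$) to conclude $D_I(\Hom_R(M,N))\cong D_I(D_I(M,N))$, and then invokes Lemma~\ref{RiDI}(iv). This avoids entirely the step you single out as hardest, namely $H^1_I(M,D_I(N))=0$; that vanishing is true, and your injective-resolution argument for it is fine (the key point being that $\Gamma_I(E^0)$ is injective, so the short exact sequence $0\to\Gamma_I(E^0)\to\Gamma_I(E^1)\to K\to 0$ splits and $\Hom_R(M,-)$ stays exact there), but it is unnecessary work. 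What your route does buy is the intermediate identification $D_I(M,N)\cong\Hom_R(M,D_I(N))$, which is of independent interest and in fact is exactly what the paper later extracts in Proposition~2.8 and Theorem~\ref{spectraland}(i).

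For part~(ii) your reduction via (i) to the classical fact $D_{aR}(K)\cong K_a$ is clean and correct. The paper instead uses Lemma~\ref{RiDI}(iii) to write $D_{aR}(M,N)\cong D_{aR}(M,N_a)$ and then commutes the localization through the $\Hom$'s in the direct system using finite generation of $I^nM$; the two arguments are equally short and essentially dual to one another (you localize the first variable of $D_I(-,-)$ after collapsing it to $D_I(-)$, the paper localizes the second variable first).
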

\begin{proof} (i).
The long exact sequence
$$0\to \Gamma_I(M,N)\to \Hom_R(M,N)\to D_I(M,N) \mathop  \to \limits^f H^1_I(M,N)\to \cdots $$
deduces an exact sequence
$$0\to \Gamma_I(M,N)\to \Hom_R(M,N)\to D_I(M,N) \to \text{Im}f\to 0.$$

Note that $\Im f$ is an $R$-submodule of $H^1_I(M,N)$, then $\Im f$ is an $I$-torsion $R$-module.

Since $\Gamma_I(M,N)$ and $\Im f$ are both $I$-torsion $R$-modules, there are isomorphisms
$$D_I(\Hom_R(M,N))\cong D_I(D_I(M,N))\cong D_I(M,N).$$

(ii).  From \ref{RiDI} we have $D_{I}(M,N)\cong D_I(M,D_I(N))$. We
now consider the module $D_I(M,D_I(N)).$ Since $I=aR$ is a principal
ideal, it follows $D_I(N)\cong N_a$. As $I^nM$ is finitely
generated,  we have by \cite[3.83]{Rotman}
$$\Hom_R(I^nM,N\otimes S^{-1}R)\cong S^{-1}R\otimes
\Hom_R(I^nM,N).$$ It follows
$$\mathop {\lim }\limits_{\begin{subarray}{c}
   \longrightarrow  \\
  n
\end{subarray}} \Hom_R(I^nM,N\otimes S^{-1}R)\cong \mathop {\lim }\limits_{\begin{subarray}{c}
   \longrightarrow  \\
  n
\end{subarray}} S^{-1}R\otimes \Hom_R(I^nM,N).$$
Hence $$D_I(M,D_I(N))\cong S^{-1}R\otimes D_I(M,N).$$ Finally, we
get $D_{aR}(M,N)\cong D_{aR}(M,N)_a.$
\end{proof}

If $E$ is an injective $R$-module, then $\Gamma_I(E)$ is also
injective  and $H^1_I(E)=0$. Hence, the short exact sequence
$$0\to\Gamma_I(E)\to E\to D_I(E)\to 0 $$ is split.
It implies that $D_I(E)$ is an injective $R$-module.

\begin{proposition}
Let $M$ be a finitely generated $R$-module, $N$ an $R$-module and
$J^{\bullet}$ an injective resolution of $N.$
 Then $$R^i D_I(M,N)\cong H^i(\Hom_R(M,D_I(J^{\bullet}))).$$
\end{proposition}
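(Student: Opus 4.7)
The plan is to reduce the statement to the very definition of a right derived functor, and then to rewrite each term $D_I(M, J^i)$ using the earlier identities already established in the excerpt.

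First I would observe that the functor $D_I(M, -) = \varinjlim \Hom_R(I^n M, -)$ is left exact, since it is a filtered direct limit of the left exact functors $\Hom_R(I^n M, -)$, and filtered direct limits are exact. Consequently, the right derived functors $R^i D_I(M, -)$ are indeed well-defined and can be computed via any injective resolution of the argument: in particular, $R^i D_I(M, N) \cong H^i(D_I(M, J^{\bullet}))$.

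Next I would combine the two previously-proven natural isomorphisms in order to rewrite $D_I(M, -)$ in a more convenient form. By Lemma \ref{RiDI}(v) there is a natural isomorphism $D_I(\Hom_R(M, L)) \cong \Hom_R(M, D_I(L))$ for every $R$-module $L$, and by Theorem \ref{DiHom}(i) there is a natural isomorphism $D_I(\Hom_R(M, L)) \cong D_I(M, L)$. Chaining these gives a natural isomorphism of functors
\[
D_I(M, -) \;\cong\; \Hom_R(M, D_I(-)).
\]
Applying this termwise to the complex $J^{\bullet}$ produces an isomorphism of cochain complexes $D_I(M, J^{\bullet}) \cong \Hom_R(M, D_I(J^{\bullet}))$; naturality is what ensures compatibility with the differentials, so this really is a chain map (and an isomorphism in each degree).

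Finally I would take $i$-th cohomology on both sides to conclude
\[
R^i D_I(M, N) \;\cong\; H^i(D_I(M, J^{\bullet})) \;\cong\; H^i(\Hom_R(M, D_I(J^{\bullet}))),
\]
as claimed. The only genuinely delicate point is ensuring that the isomorphism $D_I(M, -) \cong \Hom_R(M, D_I(-))$ is natural, so that it assembles into a morphism of complexes rather than just a degreewise isomorphism; but this follows automatically from the fact that it is built out of two natural transformations coming from Lemma \ref{RiDI}(v) and Theorem \ref{DiHom}(i).
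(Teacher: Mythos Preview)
Your proof is correct and follows essentially the same route as the paper: both use the definition $R^iD_I(M,N)=H^i(D_I(M,J^\bullet))$ and then rewrite each term via the two identities $D_I(M,-)\cong D_I(\Hom_R(M,-))$ (Theorem~\ref{DiHom}(i)) and $D_I(\Hom_R(M,-))\cong \Hom_R(M,D_I(-))$ (Lemma~\ref{RiDI}(v)). Your write-up is simply more explicit about left exactness of $D_I(M,-)$ and about the naturality needed to pass from degreewise isomorphisms to an isomorphism of complexes.
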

\begin{proof}
Combining \ref{DiHom} with \ref{RiDI} yields
$$\begin{array}{rl}
R^iD_I(M,N)&=H^i(D_I(M,J^\bullet))\\
&\cong H^i(D_I(\Hom(M,J^\bullet)))\\
 &\cong
H^i(\Hom(M,D_I(J^\bullet)))
\end{array}$$
as required.
\end{proof}

Next, we study the finiteness of generalized ideal transforms which
relates to generalized local cohomology modules.

\begin{proposition}\label{Connect1}
Let $M,N$ be two finitely generated $R$-modules and $i$ a positive integer. Then $H^i_I(M,N)$ is finitely generated if and only if $R^{i-1}D_I(M,N)$ is finitely generated.
\end{proposition}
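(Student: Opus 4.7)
The plan is to read off the proposition directly from the long exact sequence in Lemma \ref{lemma1}, using the fact that $\mathrm{Ext}$-modules of two finitely generated modules over a Noetherian ring are finitely generated.

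First, I would extract from Lemma \ref{lemma1} the four-term exact piece
$$\Ext^{i-1}_R(M,N) \xrightarrow{\alpha} R^{i-1}D_I(M,N) \xrightarrow{\beta} H^i_I(M,N) \xrightarrow{\gamma} \Ext^i_R(M,N).$$
Since $R$ is Noetherian and $M,N$ are finitely generated, both $\Ext^{i-1}_R(M,N)$ and $\Ext^i_R(M,N)$ are finitely generated. In particular, $\ker\beta = \operatorname{Im}\alpha$ is a quotient of a finitely generated module, hence finitely generated, and $H^i_I(M,N)/\operatorname{Im}\beta = H^i_I(M,N)/\ker\gamma$ embeds into $\Ext^i_R(M,N)$, so it too is finitely generated.

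For the forward direction, assume $R^{i-1}D_I(M,N)$ is finitely generated. Then $\operatorname{Im}\beta$ is a quotient of it and is finitely generated, and together with the finite generation of $H^i_I(M,N)/\operatorname{Im}\beta$ noted above, this forces $H^i_I(M,N)$ to be finitely generated. For the converse, assume $H^i_I(M,N)$ is finitely generated. Then $\operatorname{Im}\beta \subseteq H^i_I(M,N)$ is finitely generated (submodule of a finitely generated module over a Noetherian ring), and combining this with finite generation of $\ker\beta$ yields finite generation of $R^{i-1}D_I(M,N)$.

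There is essentially no obstacle: the whole statement is a diagram chase on the long exact sequence of Lemma \ref{lemma1}, and the two hypotheses on $M$ and $N$ enter only through the (standard) finite generation of the flanking $\Ext$ terms. The only thing to be careful about is to cite the Noetherian assumption on $R$ when passing to submodules of finitely generated modules, which is what makes both halves of the argument symmetric.
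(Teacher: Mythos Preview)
Your proof is correct and follows exactly the same approach as the paper: invoke the long exact sequence of Lemma~\ref{lemma1} and use that $\Ext^i_R(M,N)$ is finitely generated for all $i$ since $R$ is Noetherian and $M,N$ are finitely generated. The paper's proof is simply the one-line version of your argument; your added detail (extracting the four-term piece and chasing through $\ker\beta$ and $\operatorname{Im}\beta$) is fine, though note your ``forward'' and ``converse'' labels are swapped relative to the order in the statement.
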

\begin{proof}

Since $M,N$ are finitely generated $R$-modules,  $\Ext^i_R(M,N)$ is
also a finitely generated $R$-module for all $i\geq 0$. By
\ref{lemma1} we have the conclusion.
\end{proof}

\begin{theorem}\label{HomFi} Let $M$ be a finitely generated $R$-module and $N$ an
$R$-module. If $t$ is a non-negative integer such that $R^iD_I(M,N)$
is finitely generated for all $i<t,$ then $\Hom_R(R/I, R^tD_I(M,N))$
is a finitely generated $R$-module.
\end{theorem}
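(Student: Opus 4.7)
The plan is to proceed by induction on $t$. For the base case $t=0$, the hypothesis is vacuous and the conclusion is immediate from Lemma~\ref{RiDI}(i): since $D_I(M,N)$ is $I$-torsion-free, $\Hom_R(R/I, D_I(M,N))=0$, which is trivially finitely generated.

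For the inductive step, I would apply the long exact sequence of Lemma~\ref{lemma1} and split it at degree $t$ to obtain a short exact sequence
$$0 \to U \to R^tD_I(M,N) \to V \to 0,$$
where $U = \Im(\Ext_R^t(M,N) \to R^tD_I(M,N))$ and $V = \Im(R^tD_I(M,N) \to H_I^{t+1}(M,N))$. The module $V$ is $I$-torsion, being a submodule of $H^{t+1}_I(M,N)$, while $U$ is a quotient of $\Ext_R^t(M,N)$. Applying the left-exact functor $\Hom_R(R/I,-)$ to this short exact sequence reduces the problem to showing both $\Hom_R(R/I,U)$ and $\Hom_R(R/I,V)$ are finitely generated. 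I would control each via the adjacent segment of Lemma~\ref{lemma1},
$$R^{t-1}D_I(M,N) \to H_I^{t}(M,N) \to \Ext_R^{t}(M,N) \to R^tD_I(M,N) \to H_I^{t+1}(M,N) \to \Ext_R^{t+1}(M,N),$$
and exploit the inductive assumption that $R^{t-1}D_I(M,N)$ and all lower $R^iD_I(M,N)$ are finitely generated, chasing the $I$-annihilator submodules along the connecting maps.

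The main obstacle is that $N$ is \emph{not} assumed to be finitely generated, so $\Ext_R^t(M,N)$ is not itself finitely generated in general; every finiteness claim must be transmitted indirectly through the long exact sequence of Lemma~\ref{lemma1} from the hypotheses on the $R^iD_I(M,N)$'s. The essential feature — and what makes the argument feasible — is that we only need the $I$-annihilator $\Hom_R(R/I,-)$ to be finitely generated, not the whole module, so the $I$-torsion pieces appearing along the way can be controlled without demanding full finite generation of the ambient Ext and local cohomology modules.
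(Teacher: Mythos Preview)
Your base case is correct and matches the paper. The inductive step, however, has a genuine gap: as you set it up, the ``induction'' never actually reduces anything. Under the hypotheses at level $t$, the module $R^{t-1}D_I(M,N)$ is already finitely generated by assumption, so invoking the inductive hypothesis for the \emph{same} $N$ at level $t-1$ yields nothing new. Your decomposition $0\to U\to R^tD_I(M,N)\to V\to 0$ via Lemma~\ref{lemma1} then leaves you needing $\Hom_R(R/I,U)$ and $\Hom_R(R/I,V)$ finitely generated, with $U$ a quotient of $\Ext_R^t(M,N)$ and $V$ a submodule of $H_I^{t+1}(M,N)$; but neither of these ambient modules is controlled by the hypotheses, and the long exact sequence of Lemma~\ref{lemma1} does not by itself transmit the finiteness of $R^iD_I(M,N)$ for $i<t$ up to degree $t{+}1$. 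The phrase ``chasing the $I$-annihilator submodules along the connecting maps'' does not name an actual mechanism here.

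The paper's argument is structurally different and supplies the missing reduction. Using Lemma~\ref{RiDI}(iii) one replaces $N$ by $D_I(N)$, which is $I$-torsion-free; this produces an $N$-regular element $x\in I$, hence a short exact sequence $0\to D_I(N)\xrightarrow{x}D_I(N)\to \overline{D_I(N)}\to 0$. The associated long exact sequence of $R^iD_I(M,-)$ shows that $R^iD_I(M,\overline{D_I(N)})$ is finitely generated for all $i<t-1$, so the inductive hypothesis applies at level $t-1$ \emph{with the new module} $\overline{D_I(N)}$. One then reads off that the kernel of multiplication by $x$ on $R^tD_I(M,D_I(N))$ has finitely generated $\Hom_R(R/I,-)$, and since $x\in I$ annihilates $\Hom_R(R/I,R^tD_I(M,D_I(N)))$, this kernel contains the whole $I$-socle. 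The key idea you are missing is precisely this change of second variable to enable a genuine descent in $t$.
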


\begin{proof}
We use induction on $t$.

Let $t=0$. We have $\Hom_R(R/I, D_I(M,N))=0$, since $D_I(M,N)$ is $I$-torsion-free.

When $t> 0$, from \ref{RiDI}, it follows
$$R^iD_I(M,N)\cong R^iD_I(M,D_I(N))$$
for all $i\geq 0$.

It is sufficient to prove that $\Hom_R(R/I, R^tD_I(M,D_I(N)))$ is finitely generated.

Since $D_I(N)$ is an $I$-torsion-free $R$-module,
 there is a $D_I(N)$-regular element $x\in I.$
Now the short exact sequence
$$0\to D_I(N)\xrightarrow{x} D_I(N)\to\overline{D_I(N)}\to 0,$$
where $\overline{D_I(N)}=D_I(N)/xD_I(N)$ gives rise a long  exact
sequence
$$0\to D_I(M,D_I(N))\xrightarrow{x} D_I(M,D_I(N))\to D_I(M,\overline{D_I(N)})\cdots$$
$$\cdots R^{t-1}D_I(M,D_I(N))\xrightarrow{h}R^{t-1}D_I(M,\overline{D_I(N)})\xrightarrow{k} R^tD_I(M,D_I(N))\cdots.$$
It induces  a short  exact sequence
$$0\to \text{Im} h\to R^{t-1}D_I(M,\overline{D_I(N)})\to \text{Im} k\to 0.$$
As $R^iD_I(M,D_I(N))$ is finitely generated for all $i<t$, $\Im h$ and

\noindent$R^iD_I(M,\overline{D_I(N)})$ are both finitely generated
$R$-modules for all $i<t-1$.

By the inductive hypothesis, $\Hom_R(R/I, R^{t-1}D_I(M,\overline{D_I(N)}))$ is finitely generated. Hence $\Hom_R(R/I, \text{Im} k)$ is finitely generated.

Next, the exact sequence
$$0\to \text{Im} k\to R^tD_I(M,D_I(N))\xrightarrow{x}R^tD_I(M,D_I(N))$$
deduces a long exact sequence
$$0\to \Hom_R(R/I,\Im k)\to \Hom_R(R/I,R^tD_I(M,D_I(N)))\xrightarrow{x}$$
$$\xrightarrow{x}\Hom_R(R/I,R^tD_I(M,D_I(N)))\to\cdots$$
As $x\in I,$  $\Hom_R(R/I, R^tD_I(M,D_I(N)))$ is finitely generated.
\end{proof}

In \cite[2.5]{ChuTang} Tang and Chu proved that if $H_I^r(M, R/\P)$
is Artinian for any $\P\in \Supp(N)$ and $r\geq \pd(M),$ then
$H^i_I(M, N)$ is Artinian for all $i\geq r.$ We show a similar
following proposition.

\begin{proposition}\label{ArGIT}
Let $M,N$ be two finitely generated $R$-modules with
$\pd(M)<\infty$. Assume that $t$ is a positive integer such that
$t>\pd(M).$
\begin{itemize}
 \item[(i)] If $R^tD_I(M,R/\P)$ is Artinian for all $\P\in
\Supp(N),$ then $R^iD_I(M,N)$ is also an Artinian $R$-module for all
$i\geq t.$
 \item[(ii)] If $R^tD_I(M,R/\P)$ and $H_I^t(M, R/\P)$ are Artinian for all $\P\in
\Supp(N),$ then $\Ext^i_R(M,N)$ is also an Artinian $R$-module for
all $i\geq t.$
\end{itemize}
\end{proposition}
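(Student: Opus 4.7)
The plan is to deduce both parts from the two tools that have just been put in place: the last clause of Lemma \ref{lemma1}, which gives $R^iD_I(M,-)\cong H_I^{i+1}(M,-)$ for all $i\geq \pd(M)+1$, and the Tang--Chu theorem quoted just before the proposition. Since the hypothesis $t>\pd(M)$ means $t\geq \pd(M)+1$, both tools will be simultaneously available on the range of indices we care about.

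For (i), I would first fix $\P\in\Supp(N)$ and apply Lemma \ref{lemma1} with $N$ replaced by $R/\P$ in degree $t\geq \pd(M)+1$, so that the assumption ``$R^tD_I(M,R/\P)$ Artinian'' translates into ``$H_I^{t+1}(M,R/\P)$ Artinian''. Applying the Tang--Chu theorem with $r=t+1$ (which trivially satisfies $r\geq \pd(M)$) then yields that $H_I^i(M,N)$ is Artinian for every $i\geq t+1$. Transporting back through the isomorphism of Lemma \ref{lemma1}, now with $N$ itself, gives $R^iD_I(M,N)\cong H_I^{i+1}(M,N)$ for $i\geq \pd(M)+1$, and hence this module is Artinian for all $i\geq t$.

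For (ii), I would observe that the hypotheses of (i) are contained in those of (ii), so part (i) already gives that $R^iD_I(M,N)$ is Artinian for all $i\geq t$. The additional hypothesis that $H_I^t(M,R/\P)$ is Artinian for every $\P\in \Supp(N)$, fed directly into Tang--Chu with $r=t$ (using $t\geq \pd(M)$), shows that $H_I^i(M,N)$ is Artinian for all $i\geq t$ as well. Now in the segment
$$H_I^i(M,N)\longrightarrow \Ext_R^i(M,N)\longrightarrow R^iD_I(M,N)$$
of the exact sequence of Lemma \ref{lemma1}, the module $\Ext_R^i(M,N)$ is an extension of a submodule of the Artinian $R^iD_I(M,N)$ by a quotient of the Artinian $H_I^i(M,N)$, so it is Artinian; this handles all $i\geq t$.

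I do not anticipate a genuine obstacle here: the proposition is essentially a bookkeeping exercise that chains the index shift $R^iD_I\leftrightarrow H_I^{i+1}$ together with Tang--Chu. The one detail worth double-checking is the compatibility of the index ranges, namely that $t>\pd(M)$ keeps us inside the range $i\geq \pd(M)+1$ where the shift is valid, and that the bounds $r=t+1$ in (i) and $r=t$ in (ii) both satisfy the hypothesis $r\geq \pd(M)$ of Tang--Chu. Both conditions are met automatically by the assumption $t>\pd(M)$.
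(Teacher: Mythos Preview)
Your argument is correct, and for part (ii) it matches the paper's proof essentially line for line: both feed (i) and \cite[2.5]{ChuTang} into the long exact sequence of Lemma~\ref{lemma1} to trap $\Ext_R^i(M,N)$ between two Artinian modules.

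For part (i) the routes diverge. The paper does not use the isomorphism $R^iD_I(M,-)\cong H_I^{i+1}(M,-)$ at all; it simply says that the proof of \cite[2.5]{ChuTang} carries over verbatim, meaning one takes a prime filtration $0=N_0\subset N_1\subset\cdots\subset N_s=N$ with $N_j/N_{j-1}\cong R/\P_j$, $\P_j\in\Supp(N)$, and climbs the filtration using the long exact sequence for $R^iD_I(M,-)$ together with the hypothesis on $R/\P$. Your approach instead transports the hypothesis through the degree shift $R^tD_I(M,R/\P)\cong H_I^{t+1}(M,R/\P)$, invokes Tang--Chu as a black box with $r=t+1$, and transports the conclusion back. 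This is arguably cleaner, since it avoids rerunning the filtration argument, but it relies on the extra input that $t>\pd(M)$ puts you in the range where the shift isomorphism is available. The paper's route would in principle work for any $t$ at which the functors $R^iD_I(M,-)$ behave well on short exact sequences, without needing the comparison to $H_I^{i+1}$.
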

\begin{proof} (i). The proof of (i) is similar to that in the proof of
\cite[2.5]{ChuTang}.

(ii). By  \ref{lemma1}  there is an exact sequence
$$0\to H_I^0(M,N)\to \Hom_R(M,N)\to D_I(M,N)\to H_I^1(M,N)\to\cdots$$
$$\cdots H_I^i(M,N)\to \Ext_R^i(M,N)\to R^iD_I(M,N)\to
H_I^{i+1}(M,N)\to\cdots.$$Thus the conclusion follows from (i) and
\cite[2.5]{ChuTang}.
\end{proof}
In the following theorem we study the Artinianness of modules $R^i
D_I(M,N)$ when $N$ is Artinian or finitely generated.

\begin{theorem} \label{T:tinhatingenide}
Let $M$ be a finitely generated $R$-module.
\begin{itemize}
\item[(i)] If  $N$ is an Artinian
$R$-module, then $R^i D_I(M,N)$ is Artinian for all $i\geq 0.$
\item[(ii)] If $N$ is a finitely generated $R$-module such that $p=\pd (M)$ and $d=\dim(N)$ are
finite,
 then $R^{p+d}D_I(M,N)$ is an Artinian $R$-module.
\end{itemize}
\end{theorem}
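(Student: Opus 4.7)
The plan is to route both parts through the long exact sequence of Lemma \ref{lemma1}, which wedges $R^iD_I(M,N)$ between $\Ext^i_R(M,N)$ and $H^{i+1}_I(M,N)$. For part (i), I would first observe that $\Ext^i_R(M,N)$ is Artinian for every $i$: pick a resolution $L_\bullet \to M$ by finitely generated free $R$-modules, so that $\Ext^i_R(M,N)$ is a subquotient of $\Hom_R(L_i,N) \cong N^{r_i}$, which is Artinian. I would then argue that $H^i_I(M,N)$ is Artinian using the standard spectral sequence
\[
E_2^{s,q} = \Ext^s_R(M, H^q_I(N)) \Rightarrow H^{s+q}_I(M,N),
\]
combined with Melkersson's classical fact that $H^q_I(N)$ is Artinian whenever $N$ is. The same free-resolution argument then makes each $E_2^{s,q}$ Artinian, so the finite filtration on $H^n_I(M,N)$ has Artinian subquotients, forcing $H^n_I(M,N)$ to be Artinian. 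Feeding both endpoints into the long exact sequence pinches $R^iD_I(M,N)$ between Artinian modules and gives the conclusion.

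For part (ii), I would split into two cases. If $d=0$, then $R/\mathrm{Ann}_R(N)$ is a zero-dimensional Noetherian ring, hence Artinian, so the finitely generated module $N$ is itself Artinian and part (i) applies directly. If $d \geq 1$, then $p+d \geq p+1$ activates Lemma \ref{lemma1}'s isomorphism $R^{p+d}D_I(M,N) \cong H^{p+d+1}_I(M,N)$. Running the same spectral sequence with $N$ finitely generated gives $E_2^{s,q}=0$ whenever $s > p$ (by the definition of projective dimension) or $q > d$ (by Grothendieck's vanishing $H^q_I(N)=0$ for $q>\dim(N)$), so $H^j_I(M,N)=0$ for every $j > p+d$; in particular $R^{p+d}D_I(M,N) \cong H^{p+d+1}_I(M,N) = 0$, which is trivially Artinian.

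The main obstacle I expect is justifying the spectral sequence $\Ext^s_R(M, H^q_I(N)) \Rightarrow H^{s+q}_I(M,N)$, which is not recalled in the paper. It arises as the Grothendieck spectral sequence of the composition $\Hom_R(M,-) \circ \Gamma_I$: note that $\Gamma_I$ sends injectives to injectives, and the abutment is identified with $H^{s+q}_I(M,N)$ via the isomorphism $\Hom_R(M,\Gamma_I(N)) \cong \varinjlim_n \Hom_R(M/I^nM, N)$ that comes from tensor-hom adjunction and the finite generation of $M$, together with exactness of direct limits on an injective resolution of $N$. Once this spectral sequence is in hand, both parts of the theorem reduce to tracking Artinianness along the long exact sequence of Lemma \ref{lemma1}.
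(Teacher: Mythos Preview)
Your proof is correct and follows the same skeleton as the paper: both parts are routed through the long exact sequence of Lemma~\ref{lemma1}, and part~(ii) is split into the cases $d=0$ and $d>0$, with the latter reduced to the vanishing $H^{p+d+1}_I(M,N)=0$. The differences are in how the auxiliary facts about $H^i_I(M,N)$ are obtained. The paper simply cites \cite[2.6]{namont} for the Artinianness of $H^i_I(M,N)$ when $N$ is Artinian, and \cite[5.1]{bijaco} for the vanishing $H^i_I(M,N)=0$ for $i>p+d$; you instead derive both from the Grothendieck spectral sequence $\Ext^s_R(M,H^q_I(N))\Rightarrow H^{s+q}_I(M,N)$, which makes your argument self-contained at the cost of setting up that spectral sequence. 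Your treatment of the case $d=0$ in~(ii) is also slightly cleaner: you observe that $N$ is then Artinian and invoke~(i) directly, whereas the paper re-runs a portion of the argument using the exact sequence and \cite{bijaco}. Both routes are valid; yours trades external citations for an extra spectral-sequence computation.
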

\begin{proof}
(i). It follows from \cite[2.6]{namont}  that $H^i_I(M,N)$ is
Artinian for all $i\geq 0.$ On the other hand,  $\Ext^i_R(M,N)$ is
Artinian for all $i\geq 0.$ Thus the claim follows from the exact
sequence of \ref{lemma1}.

(ii).  When $d=\dim(N)=0$. It is clear that $N$ is an Artinian
$R$-module. Hence $\Ext^i_R(M,N)$ is Artinian for all $i\geq 0.$

By  \cite[5.1]{bijaco},  $H^i_I(M,N)=0$ for all $i>\pd(M)$
 and $H^{p}_I(M,N)$ is Artinian. Now  we have the exact sequence by \ref{lemma1}
$$\cdots R^{p-1}D_I(M,N)\to H^p_I(M,N)\to \Ext^p_R(M,N)\to R^pD_I(M,N)\to 0.$$
It follows that $R^{p}D_I(M,N)$ is an
Artinian $R$-module.

Let $d=\dim(N)>0$. Since $\Ext^{i}_R(M,N)=0$ for all $i>\pd(M),$
$$R^{p+d}D_I(M,N)\cong H^{p+d+1}_I(M,N)=0.$$  
This finishes the proof.
\end{proof}
\bigskip

\section{Associated primes of the modules $R^iD_I(M,N)$}
\medskip

To study  some properties of associated primes of $R^iD_I(M,N)$ we
recall the concepts of \emph{weakly Laskerian} modules \cite{divass}
and $FSF$ modules \cite{quyont}.
 An $R$-module $M$ is called
\emph{weakly Laskerian} if the set of associated of prime ideals of
any quotient module of $M$ is finite.  An $R$-module $M$ is called a
$FSF$ module if there is a finitely generated submodule $N$ of $M$
such that the support of $M/N$ is a finite set. Note that a module
$M$ is a weakly Laskerian module if and only if $M$ is a $FSF$
module (see \cite[2.5]{bahont}).

\begin{lemma}[\cite{divass}]\label{3.1}
\begin{itemize}

\item[(i)] Let $0\to M^{'}\to M\to M^{''}\to 0$ be a short exact sequence. Then $M$ is weakly Laskerian if and only if $M^{'}$ and $M^{''}$ are both weakly Laskerian.

\item[(ii)] If $M$ is a finitely generated $R$-module and $N$ is a weakly Laskerian, then $\Ext^i_R(M,N)$ and $\Tor^R_i(M,N)$ are weakly Laskerian for all $i\geq 0$.

\item[(iii)] Artinian modules and finitely generated modules are weakly Laskerian modules.
\end{itemize}
\end{lemma}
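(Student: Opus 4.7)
The plan is to prove each of the three parts in sequence, working directly from the definition of weakly Laskerian module (finiteness of $\Ass$ for every quotient).

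For (i), I would handle the two implications separately. Suppose first that $M$ is weakly Laskerian. For any submodule $K' \subseteq M'$, the inclusion $M' \hookrightarrow M$ descends to an embedding $M'/K' \hookrightarrow M/K'$, giving $\Ass(M'/K') \subseteq \Ass(M/K')$, which is finite. For any submodule $L \subseteq M''$, its preimage $K \subseteq M$ under the surjection $M \to M''$ contains $M'$ and satisfies $M/K \cong M''/L$, so $\Ass(M''/L)$ is finite as well. Conversely, assume both $M'$ and $M''$ are weakly Laskerian. Given $K \subseteq M$ with projection $\pi\colon M \to M''$, the induced exact sequence
$$0 \to M'/(M' \cap K) \to M/K \to M''/\pi(K) \to 0$$
together with the standard inclusion $\Ass(M/K) \subseteq \Ass(M'/(M' \cap K)) \cup \Ass(M''/\pi(K))$ yields finiteness of $\Ass(M/K)$.

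For (ii), I would pick a resolution $F_\bullet \to M$ by finitely generated free $R$-modules, which exists since $R$ is Noetherian and $M$ is finitely generated. Then $\Hom_R(F_i, N) \cong N^{r_i}$ and $F_i \otimes_R N \cong N^{r_i}$ for some $r_i \in \mathbb{N}$. A straightforward induction from (i) shows that finite direct sums of weakly Laskerian modules are weakly Laskerian, so each $\Hom_R(F_i, N)$ and $F_i \otimes_R N$ is weakly Laskerian. Since $\Ext^i_R(M,N)$ and $\Tor^R_i(M,N)$ are subquotients of these complexes' terms, one more application of (i) (to the kernel-then-cokernel presentation) delivers the claim.

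For (iii), if $M$ is finitely generated then every quotient is finitely generated, and finitely generated modules over a Noetherian ring have finitely many associated primes. If $M$ is Artinian, every quotient is Artinian; any $\mathfrak{p} \in \Ass(M)$ corresponds to an embedding $R/\mathfrak{p} \hookrightarrow M$, forcing $R/\mathfrak{p}$ to be Artinian and hence $\mathfrak{p}$ to be maximal. An Artinian module has only finitely many maximal ideals in its support (equivalently, finite socle supported on finitely many maximals), so $\Ass$ is finite for $M$ and for each of its Artinian quotients.

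The main obstacle will be the direct-sum stability used in (ii); but it reduces by induction on the number of summands to the case of two summands, which is immediate from the short exact sequence $0 \to N \to N \oplus N \to N \to 0$ together with (i).
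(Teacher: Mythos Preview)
Your argument is correct in all three parts. The only place where a reader might want slightly more care is the Artinian case in (iii): you assert that an Artinian module has only finitely many maximal ideals in its support. This is true, and your parenthetical hint is on the right track; to make it airtight you could note that distinct maximal ideals $\mathfrak{m}_1,\mathfrak{m}_2,\ldots\in\Ass(M)$ yield embeddings $R/\mathfrak{m}_i\hookrightarrow M$ whose images, being simple with pairwise distinct annihilators, form an internal direct sum inside $M$, contradicting the descending chain condition if there were infinitely many.

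As for comparison with the paper: there is nothing to compare. The paper does not prove this lemma at all; it is stated with a citation to Divaani-Aazar and Mafi (\cite{divass}) and used as a black box. Your write-up therefore supplies a self-contained proof where the paper offers none, and the approach you take---working directly from the definition via quotients and the behaviour of $\Ass$ on short exact sequences, then bootstrapping to $\Ext$ and $\Tor$ through finitely generated free resolutions---is the standard and natural one, essentially the same line of reasoning found in the original source.
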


\begin{proposition}\label{coroDiHom}
Let $M, N$ be  finitely generated $R$-modules. Then $$\Ass(D_I(M,N))=(\Supp(M)\cap \Ass(N))\setminus V(I).$$
\end{proposition}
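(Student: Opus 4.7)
The plan is to reduce the statement to known facts about the ordinary ideal transform by invoking the isomorphism established in Theorem \ref{DiHom}(i).

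First I would observe that, since $M$ and $N$ are both finitely generated, the module $L := \Hom_R(M,N)$ is finitely generated as well. By Theorem \ref{DiHom}(i) we have $D_I(M,N) \cong D_I(L)$, so it suffices to compute $\Ass(D_I(L))$ for this ordinary ideal transform of the finitely generated module $L$.

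Next I would recall the standard exact sequence $0 \to L/\Gamma_I(L) \to D_I(L) \to H^1_I(L) \to 0$. Because $D_I(L)$ is $I$-torsion-free (this is the ordinary-module version of Lemma \ref{RiDI}(i)), no element of $\Ass(D_I(L))$ lies in $V(I)$; on the other hand $H^1_I(L)$ is $I$-torsion, so $\Ass(H^1_I(L)) \subseteq V(I)$. Combining these two observations with the exact sequence yields $\Ass(D_I(L)) = \Ass(L/\Gamma_I(L)) \setminus V(I) = \Ass(L) \setminus V(I)$.

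Finally I would appeal to the classical identity
\[
\Ass_R(\Hom_R(M,N)) = \Supp(M) \cap \Ass_R(N),
\]
valid for finitely generated $M$ and $N$ over the Noetherian ring $R$. Chaining the three identifications gives
\[
\Ass(D_I(M,N)) \;=\; \Ass(D_I(L)) \;=\; \Ass(\Hom_R(M,N)) \setminus V(I) \;=\; (\Supp(M) \cap \Ass(N)) \setminus V(I),
\]
as required. The only non-routine step is the first reduction, which depends critically on Theorem \ref{DiHom}(i); once $D_I(M,N)$ is replaced by the ordinary transform of a finitely generated module, the rest is a clean application of well-known formulas for $\Ass$ of $\Hom$ and of the torsion-free/$I$-torsion decomposition supplied by the Brodmann exact sequence.
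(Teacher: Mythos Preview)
Your proof is correct and follows essentially the same route as the paper: reduce via Theorem \ref{DiHom}(i) to $D_I(\Hom_R(M,N))$, then combine $\Ass(D_I(L)) = \Ass(L)\setminus V(I)$ with the classical formula $\Ass(\Hom_R(M,N)) = \Supp(M)\cap\Ass(N)$. The only cosmetic difference is that the paper cites \cite[3.1]{schonc} for the identity $\Ass(D_I(L)) = \Ass(L)\setminus V(I)$, whereas you supply a direct argument via the Brodmann exact sequence.
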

\begin{proof} It follows from \cite[3.1]{schonc} that $\Ass(D_I(N))=\Ass(N)\setminus V(I)$.
Then we have by \ref{DiHom}

\begin{tabular}{rl}
$\Ass(D_I(M,N))$&$=\Ass(D_I(\Hom_R(M,N)))$\\
& $=\Ass(\Hom_R(M,N))\setminus V(I)$\\
&$=(\Supp(M)\cap \Ass(N))\setminus V(I)$
\end{tabular}

as required.
\end{proof}

It is well-known that, if $M,N$ are finitely generated $R$-modules,
then $H^i_{\mathfrak{m}}(M,N)$ is Artinian for all $i\geq 0$ (see
\cite[2.2]{divonv}). It implies that
$\Supp(H^i_{\mathfrak{m}}(M,N))$ is a finite set. When $N$ is a
weakly Laskerian $R$-module we have the following theorem.

\begin{theorem}\label{wLGLC}
Let $M$ be a finitely generated $R$-module and $N$ a weakly
Laskerian $R$-module. If  $\mathfrak{m}$ is a maximal ideal of $R,$
then $\Supp(H^i_{\mathfrak{m}}(M,N))$ and
$\Ass(R^iD_{\mathfrak{m}}(M,N))$  are  finite sets for all $i\geq
0.$
\end{theorem}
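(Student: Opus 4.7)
The plan is to handle the two finiteness claims separately, exploiting the long exact sequence from Lemma \ref{lemma1} to reduce the study of $R^iD_{\mathfrak{m}}(M,N)$ to modules whose associated primes are already under control.

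First I would dispose of the support statement, which is almost automatic. For any ideal $I$, the module $M/I^n M$ is annihilated by $I^n$, hence so is $\Ext^i_R(M/I^n M, N)$; passing to the direct limit shows that every element of $H^i_I(M,N)$ is killed by some power of $I$, so $H^i_I(M,N)$ is $I$-torsion for every $i$ and every $N$. Specializing to $I=\mathfrak{m}$ maximal gives $\Supp(H^i_{\mathfrak{m}}(M,N)) \subseteq V(\mathfrak{m}) = \{\mathfrak{m}\}$, which is (trivially) finite. As a byproduct, $\Ass(H^{i+1}_{\mathfrak{m}}(M,N)) \subseteq \{\mathfrak{m}\}$ as well, a fact I would reuse below.

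For the second claim, the essential input is Lemma \ref{3.1}(ii): because $M$ is finitely generated and $N$ is weakly Laskerian, $\Ext^i_R(M,N)$ is weakly Laskerian, and in particular every one of its quotients has only finitely many associated primes. I would then extract three consecutive terms from Lemma \ref{lemma1},
$$\Ext^i_R(M,N) \xrightarrow{\alpha} R^iD_{\mathfrak{m}}(M,N) \xrightarrow{\beta} H^{i+1}_{\mathfrak{m}}(M,N),$$
and form the short exact sequence
$$0 \to \Im \alpha \to R^iD_{\mathfrak{m}}(M,N) \to \Im \beta \to 0.$$
Here $\Im \alpha$ is a quotient of $\Ext^i_R(M,N)$, hence has finite $\Ass$ by the weakly Laskerian property, while $\Im \beta$ embeds in $H^{i+1}_{\mathfrak{m}}(M,N)$, whose associated primes are contained in $\{\mathfrak{m}\}$ by the first paragraph. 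Combining these with the usual inclusion $\Ass(R^iD_{\mathfrak{m}}(M,N)) \subseteq \Ass(\Im \alpha) \cup \Ass(\Im \beta)$ finishes the argument. (For $i=0$ the same shape works, with $\Hom_R(M,N)$ playing the role of $\Ext^0_R(M,N)$, which is still weakly Laskerian.)

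There is essentially no hard step here. The $\mathfrak{m}$-torsion observation is immediate from the construction of generalized local cohomology, and the rest is a formal computation with Lemma \ref{lemma1} together with the stability of the weakly Laskerian property under $\Ext^i_R(M,-)$. The only subtlety worth flagging is that $\Im \alpha$ appears as a \emph{quotient} --- not a submodule --- of $\Ext^i_R(M,N)$, so one really does need the full weakly Laskerian hypothesis; finiteness of $\Ass(\Ext^i_R(M,N))$ by itself would not close the argument.
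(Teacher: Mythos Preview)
Your argument is correct, and for the second claim it matches the paper's: both extract the three-term piece of Lemma~\ref{lemma1} and bound $\Ass(R^iD_{\mathfrak m}(M,N))$ by the associated primes of a quotient of $\Ext^i_R(M,N)$ (finite by the weakly Laskerian property) together with those of a submodule of $H^{i+1}_{\mathfrak m}(M,N)$.

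Where you differ is in the first claim. The paper proves $\Supp(H^i_{\mathfrak m}(M,N))$ finite by invoking the FSF description of weakly Laskerian modules: one chooses a finitely generated $L\subseteq N$ with $\Supp(N/L)$ finite, uses that $H^i_{\mathfrak m}(M,L)$ is Artinian (hence has finite support), and reads off $\Supp(H^i_{\mathfrak m}(M,N))\subseteq \Supp(H^i_{\mathfrak m}(M,L))\cup\Supp(N/L)$ from the long exact sequence. Your route is more direct and actually stronger: the observation that each $\Ext^i_R(M/I^nM,N)$ is annihilated by $I^n$ shows $H^i_I(M,N)$ is $I$-torsion for \emph{any} $N$, so for $I=\mathfrak m$ the support lies in $\{\mathfrak m\}$ without using the weakly Laskerian hypothesis at all. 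This also makes the input $\Ass(H^{i+1}_{\mathfrak m}(M,N))\subseteq\{\mathfrak m\}$ to the second step immediate. The paper's argument, on the other hand, would adapt to situations (other ideals, other functors) where the torsion shortcut is unavailable but an FSF-type decomposition still works.
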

\begin{proof}
As $N$ is weakly Laskerian, there exists a finitely generated
submodule $L$ of $N$ such that $\Supp(N/L)$ is a finite set. Then
the short exact sequence
$$0\to L\to N\to N/L\to 0$$
deduces a long exact sequence
$$\cdots \to H^i_{\mathfrak{m}}(M,L)\overset{f}\to H^i_{\mathfrak{m}}(M,N)\overset{g}\to H^i_{\mathfrak{m}}(M,N/L)\to \cdots$$

Since $H^i_{\mathfrak{m}}(M,L)$ is an Artinian $R$-module,
$\Supp(H^i_{\mathfrak{m}}(M,L))$ is a finite set and $\Im f$ is an
Artinian $R$-module.

Note that $\Supp(\Im g)$ is a finite set because $$\Supp (\Im g)\subset \Supp(H^i_{\mathfrak{m}}(M,N/L))\subset \Supp(N/L).$$

From the long exact sequence, we obtain a short exact sequence
$$0\to \text{Im}f\to H^i_{\mathfrak{m}}(M,N)\to \text{Im}g\to 0$$
which implies $\Supp(H^i_{\mathfrak{m}}(M,N))= \Supp(\Im f)\cup
\Supp(\Im g)$. Thus $\Supp(H^i_{\mathfrak{m}}(M,N))$ is a finite set
and then
 $\Ass(H^i_\mathfrak{m}(M,N))$ is a finite set.
We now consider the exact sequence
$$0\to \Gamma_\mathfrak{m}(M,N)\to \Hom_R(M,N)\to D_\mathfrak{m}(M,N)\to\cdots$$
$$\cdots\to \Ext^i_R(M,N)\to R^iD_\mathfrak{m}(M,N)\to H^{i+1}_\mathfrak{m}(M,N)\to\cdots $$
Since $\Ass(\Ext^i_R(M,N))$ is a finite set, we have the conclusion.
\end{proof}

\begin{proposition}\label{hoaass3.1}
Let  $M$ be a finitely generated module and $N$ a weakly Laskerian
module over a   local ring $(R, \frak{m}).$ If $\dim (N)\leq 2$,
then $R^iD_I(M,N)$ is weakly Laskerian for all $i\geq 0.$
\end{proposition}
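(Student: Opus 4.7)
The plan is to reduce, by means of the long exact sequence in Lemma~\ref{lemma1}, to the statement that each generalized local cohomology module $H^i_I(M,N)$ is weakly Laskerian. Indeed, Lemma~\ref{lemma1} provides, for each $i$, the exact fragment
\[
\Ext^i_R(M,N)\to R^iD_I(M,N)\to H^{i+1}_I(M,N).
\]
Since $M$ is finitely generated and $N$ is weakly Laskerian, Lemma~\ref{3.1}(ii) ensures that $\Ext^i_R(M,N)$ is weakly Laskerian for every $i$; combined with the closure of the weakly Laskerian class under subquotients and extensions (Lemma~\ref{3.1}(i)), this implies that it is enough to prove $H^j_I(M,N)$ is weakly Laskerian for all $j\geq 0$.

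To handle $H^j_I(M,N)$, I would use the FSF description of weakly Laskerian modules from \cite[2.5]{bahont}: choose a finitely generated submodule $L\subseteq N$ with $\Supp(N/L)$ finite. The short exact sequence $0\to L\to N\to N/L\to 0$ yields
\[
H^j_I(M,L)\to H^j_I(M,N)\to H^j_I(M,N/L),
\]
so by Lemma~\ref{3.1}(i) it suffices to verify that both $H^j_I(M,L)$ and $H^j_I(M,N/L)$ are weakly Laskerian.

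For $H^j_I(M,L)$, the submodule $L$ is finitely generated with $\dim L\leq\dim N\leq 2$, so one can appeal to the finiteness results for generalized local cohomology of finitely generated modules of low dimension (obtained via a prime filtration of $L$ together with the base case $\dim L=0$, where $L$ is Artinian and so are all $H^j_I(M,L)$ by \cite[2.6]{namont}). For $H^j_I(M,N/L)$, the finiteness of $\Supp(N/L)$ combined with the bound on $\dim(N/L)$ allows one to filter $N/L$ by modules supported at a single prime in the finite support, reducing again to Artinian/finitely supported pieces.

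The main obstacle is the finitely generated low-dimensional case: showing rigorously that $H^j_I(M,L)$ is weakly Laskerian when $L$ is finitely generated with $\dim L\leq 2$. This is the crux of the proof; the dimension bound is essential to force the cohomology in high degrees to be either zero or Artinian, while the good behaviour of the lower-degree terms relies on the finite generation of $L$ (which in particular makes $\Ext^j_R(M,L)$ finitely generated). Once that case is settled, the rest of the proof is a mechanical chase through the two exact sequences above, invoking Lemma~\ref{3.1}(i)--(ii) at each step.
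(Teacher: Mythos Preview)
Your outer reduction---using the exact fragment $\Ext^i_R(M,N)\to R^iD_I(M,N)\to H^{i+1}_I(M,N)$ from Lemma~\ref{lemma1} together with Lemma~\ref{3.1}(i),(ii) to reduce to the weak Laskerianness of $H^j_I(M,N)$---is exactly what the paper does. The difference is that the paper then simply invokes \cite[3.1]{Hoang}, which already states that $H^i_I(M,N)$ is weakly Laskerian for all $i$ whenever $M$ is finitely generated, $N$ is weakly Laskerian, and $\dim N\le 2$; the proof in the paper is three lines.

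Your FSF decomposition is thus an attempt to reprove Hoang's cited result rather than an alternative route around it. Two remarks on that sketch: first, the $N/L$ piece is easier than you indicate---any module with finite support is automatically weakly Laskerian, since every quotient again has finite support and hence finite $\Ass$; no filtration or dimension bound is needed there. Second, the part you flag as the ``main obstacle'' (the finitely generated case with $\dim L\le 2$) is precisely the substantive content of \cite[3.1]{Hoang}, so your proposal does not avoid that input but rather relocates it. Your prime-filtration outline for that case is the right idea, but as written it is only a sketch; if you want a self-contained argument you would still need to carry it out in full, whereas the paper is content to cite it.
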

\begin{proof}
From \ref{lemma1} there is an exact sequence
$$0\to H_I^0(M,N)\to \Hom_R(M,N)\to D_I(M,N)\to H_I^1(M,N)\to\cdots$$
$$\cdots H_I^i(M,N)\to \Ext_R^i(M,N)\to R^iD_I(M,N)\to H_I^{i+1}(M,N)\to\cdots$$
If $dim(N)\leq 2$, then $H^i_I(M,N)$ is weakly Laskerian for all
$i\geq 0$ by \cite[3.1]{Hoang}. Note that $\Ext^i_R(M,N)$ is weakly
Laskerian for all $i\geq 0$. Therefore $R^iD_I(M,N)$ is weakly
Laskerian for all $i\geq 0.$
\end{proof}

\begin{theorem}\label{spectraland} Let $M$ be a finitely generated $R$-module and $N$  an
$R$-module. Then
\begin{itemize}
 \item[(i)] There is a Grothendieck spectral sequence
$$E_2^{pq}=\Ext^p_R(M,R^qD_I(N))\underset{p}{\Longrightarrow} R^{p+q}D_I(M,N);$$
\item[(ii)] $\Ass_R (R^t D_I (M,N)) \subseteq (\bigcup\limits_{i = 1}^t \Ass_R
(E_{t+2}^{i,t-i}))\bigcup  \Ass_R(\Hom(M,R^{t}D_I (N)));$
\item[(iii)] $\Supp_R (R^t D_I (M,N)) \subseteq \bigcup\limits_{i = 0}^t {\Supp_R (\Ext_R^i (M,R^{t-i}D_I
(N)))}.$
\end{itemize}
\end{theorem}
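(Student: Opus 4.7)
The plan is to realize $D_I(M,-)$ as a composition of two functors so that Grothendieck's theorem on composed functors produces the spectral sequence in (i), and then extract (ii) and (iii) by standard filtration/subquotient arguments.

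For (i), combining Theorem \ref{DiHom}(i) with Lemma \ref{RiDI}(v) yields a natural isomorphism
$$D_I(M,N)\cong D_I(\Hom_R(M,N))\cong \Hom_R(M,D_I(N)),$$
so $D_I(M,-)$ factors as the composition $\Hom_R(M,-)\circ D_I(-)$. To invoke Grothendieck's theorem for composed functors I must verify that $D_I$ carries injectives to $\Hom_R(M,-)$-acyclic objects. This is precisely the observation recorded in the paragraph preceding Proposition 2.10: if $E$ is injective then $\Gamma_I(E)$ is injective and $H^1_I(E)=0$, so the exact sequence $0\to\Gamma_I(E)\to E\to D_I(E)\to 0$ splits, making $D_I(E)$ a direct summand of $E$ and hence injective. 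The Grothendieck spectral sequence of the composition then exists and takes the stated form.

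For (ii) and (iii), the first-quadrant spectral sequence converges to $R^{p+q}D_I(M,N)$ via a finite decreasing filtration
$$0=F^{t+1}\subseteq F^t\subseteq\cdots\subseteq F^0=R^tD_I(M,N),\qquad F^p/F^{p+1}\cong E_\infty^{p,\,t-p}.$$
I would first check that on the total-degree-$t$ diagonal the page $E_{t+2}$ already coincides with $E_\infty$: the outgoing differential $d_r:E_r^{p,q}\to E_r^{p+r,\,q-r+1}$ has target outside the first quadrant once $r>q+1$, the incoming differential $d_r:E_r^{p-r,\,q+r-1}\to E_r^{p,q}$ has source outside once $r>p$, and $r=t+2$ exceeds both thresholds when $q=t-p$ and $0\leq p\leq t$. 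For $p=0$ there is no nontrivial incoming differential at any page, so $E_\infty^{0,t}$ embeds as a submodule of $E_2^{0,t}=\Hom_R(M,R^tD_I(N))$, giving $\Ass(E_\infty^{0,t})\subseteq \Ass(\Hom_R(M,R^tD_I(N)))$. Applying $\Ass(X)\subseteq \Ass(Y)\cup \Ass(Z)$ repeatedly to the short exact sequences of the filtration then yields (ii). For (iii), each $E_\infty^{p,\,t-p}$ is a subquotient of $E_2^{p,\,t-p}=\Ext^p_R(M,R^{t-p}D_I(N))$, so its support lies in $\Supp(\Ext^p_R(M,R^{t-p}D_I(N)))$, and the same filtration transfers this containment to $R^tD_I(M,N)$.

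The genuinely new input is part (i); the hard step there is the identification $D_I(M,-)\cong \Hom_R(M,-)\circ D_I$, which relies on the finite generation of $M$ via Lemmas \ref{preDir} and \ref{RiDI}(v). Once the spectral sequence is available, (ii) and (iii) are bookkeeping about the stabilization page and the behavior of $\Ass$ and $\Supp$ under subquotients and filtrations.
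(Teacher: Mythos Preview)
Your proposal is correct and follows essentially the same route as the paper: for (i) you identify $D_I(M,-)\cong \Hom_R(M,-)\circ D_I(-)$ via \ref{DiHom}(i) and \ref{RiDI}(v), check that $D_I$ preserves injectives, and apply Grothendieck's composed-functor theorem; for (ii) and (iii) you use the convergence filtration, the stabilization $E_{t+2}^{i,t-i}=E_\infty^{i,t-i}$ on the diagonal, and the edge inclusion $E_\infty^{0,t}\subseteq E_2^{0,t}$, exactly as the paper does. Your write-up is in fact slightly more careful than the paper's in spelling out why page $t+2$ is already stable.
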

\begin{proof} (i). Let us consider functors $F(-) = \Hom_R(M,-)$ and $G(-) =
D_I(-).$ The functor $F(-)$ is left exact.  For any injective module
$E,$  $G(E)$ is also an injective module and then  is right
$F$-acyclic. On the other hand,  there is a natural equivalence  by
\ref{DiHom}, $D_I(M,-)\cong \Hom_R(M,D_I(-))$. Thus from
\cite[11.38]{Rotman} we have the Grothendieck spectral sequence
$$E_2^{pq}=\Ext^p_R(M,R^qD_I(N))\underset{p}{\Longrightarrow}
R^{p+q}D_I(M,N).$$

(ii). From the spectral of (i) there is a finite filtration $\Phi$
of $R^{p+q}D_I(M,N)$ with
$$0=\Phi^{t+1}H^t\subset \Phi^tH^t\subset\ldots\subset\Phi^1H^t\subset\Phi^0H^t=R^t D_I(M,N)$$
and
$$E^{i,t-i}_\infty=\Phi^iH^t/\Phi^{i+1}H^t,\ {\rm where}\ t=p+q,\  0\leq i\leq t.$$
Exact sequences for all $0\leq i\leq t$
$$0\to \Phi^{i+1}H^t\to \Phi^iH^t\to E^{i,t-i}_\infty\to 0$$
gives $$\Ass(\Phi^iH^t)\subseteq \Ass(\Phi^{i+1}H^t) \bigcup
\Ass(E^{i,t-i}_\infty).$$ We may  integrate this for $i=0, 1, ...,
t$ to conclude that $$\Ass(R^t D_I(M,N))\subseteq \bigcup\limits_{i
= 0}^t \Ass_R (E^{i,t-i}_\infty).$$ We now consider homomorphisms of
the spectral
$$E_{t+2}^{i-t-2,2t-i+1}\longrightarrow E_{t+2}^{i,t-i}\longrightarrow
E_{t+2}^{i+t+2,-i-1}.$$Note that $E_{t+2}^{i-t-2,2t-i+1} =
E_{t+2}^{i+t+2,-i-1} = 0$ for $i=0, 1, ..., t.$ It follows
$$E_{t+2}^{i,t-i} = E_{t+3}^{i,t-i} = ... = E_{\infty}^{i,t-i}.$$In
particular,$$E_{\infty}^{0,t} = ... = E_{t+3}^{0,t} =
E_{t+2}^{0,t}\subseteq E_{t+1}^{0,t}\subseteq ... \subseteq
E_2^{0,t}.$$ Therefore $$\Ass_R (R^t D_I (M,N)) \subseteq
(\bigcup\limits_{i = 1}^t \Ass_R (E_{t+2}^{i,t-i}))\bigcup
\Ass_R(\Hom(M,R^{t}D_I (N))).$$

(iii). Analysis similar to that in the proof of (ii) shows that
 $$\Supp(R^t D_I(M,N))\subseteq \bigcup\limits_{i
= 0}^t \Supp_R (E^{i,t-i}_\infty)$$and
$$E_{t+2}^{i,t-i} = E_{t+3}^{i,t-i} = ... =
E_{\infty}^{i,t-i}.$$Thus $E_{\infty}^{i,t-i}$ is a subquotient of
$E_{2}^{i,t-i}$ and then $$\Supp_R (E^{i,t-i}_\infty)\subseteq
\Supp_R (E_{2}^{i,t-i}) = \Supp_R (\Ext_R^i (M,R^{t-i}D_I
(N)))).$$This finishes the proof.
\end{proof}

\begin{corollary}\label{C:3.4}
Let $M$ be a finitely generated $R$-module, $N$ a weakly Laskerian
$R$-module and $t$ a non-negative integer. If $R^iD_I(N)$ is weakly
Laskerian  for all  $i< t,$ then $\Ass(R^tD_I(M,N))$ is a finite
set.
\end{corollary}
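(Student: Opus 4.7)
The plan is to apply Theorem~\ref{spectraland}(ii) directly; it supplies the containment
$$\Ass_R(R^tD_I(M,N)) \subseteq \Bigl(\bigcup_{i=1}^{t}\Ass_R(E_{t+2}^{i,t-i})\Bigr) \cup \Ass_R\bigl(\Hom_R(M, R^tD_I(N))\bigr),$$
so it is enough to check that each set on the right-hand side is finite.

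For the terms with $1 \leq i \leq t$ I would argue as follows. Each $E_{t+2}^{i,t-i}$ is, by its construction through successive kernels and cokernels of differentials, a subquotient of the second page module $E_2^{i,t-i} = \Ext_R^i(M, R^{t-i}D_I(N))$. Because $t - i \leq t - 1 < t$, the hypothesis says $R^{t-i}D_I(N)$ is weakly Laskerian; Lemma~\ref{3.1}(ii) then gives that $\Ext_R^i(M, R^{t-i}D_I(N))$ is weakly Laskerian, and Lemma~\ref{3.1}(i) shows that weak Laskerianness passes to subquotients. Consequently each $E_{t+2}^{i,t-i}$ is weakly Laskerian and has only finitely many associated primes.

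The main obstacle is the remaining term $\Hom_R(M, R^tD_I(N))$, about whose underlying module the hypothesis is silent. Since $M$ is finitely generated, $\Ass_R(\Hom_R(M, R^tD_I(N))) = \Supp_R(M) \cap \Ass_R(R^tD_I(N))$, so it suffices to prove $\Ass_R(R^tD_I(N))$ is finite. My approach would be to apply the single-module case of Lemma~\ref{lemma1}, which yields $R^tD_I(N) \cong H_I^{t+1}(N)$ for $t \geq 1$ together with the familiar four-term exact sequence in low degrees, and then feed the hypothesis through this to deduce that $H_I^j(N)$ is weakly Laskerian for all $0 \leq j \leq t$. A Brodmann--Lashgari type finiteness theorem for associated primes of local cohomology of weakly Laskerian modules then gives $\Ass_R(H_I^{t+1}(N))$ finite, which settles the $t \geq 1$ case; the case $t = 0$ is handled by the same four-term sequence combined with finiteness of $\Ass_R(H_I^1(N))$ for weakly Laskerian $N$. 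Combining the three contributions gives the corollary.
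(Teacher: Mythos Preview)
Your proposal is correct and follows essentially the same route as the paper: apply Theorem~\ref{spectraland}(ii), use the weak Laskerian hypothesis together with Lemma~\ref{3.1} to control the $E_{t+2}^{i,t-i}$ terms as subquotients of $E_2^{i,t-i}$, and handle the remaining $\Hom_R(M,R^tD_I(N))$ term via the isomorphism $R^iD_I(N)\cong H_I^{i+1}(N)$ for $i>0$ and a finiteness result for associated primes of local cohomology (the paper cites \cite[2.2]{brofin}). If anything, you are slightly more careful than the paper about the endpoint $i=t$ and the case $t=0$.
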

\begin{proof}
We have by \ref{spectraland}
 $$\Ass_R (R^t D_I (M,N)) \subseteq (\bigcup\limits_{i = 1}^t
\Ass_R (E_{t+2}^{i,t-i}))\bigcup \Ass_R(\Hom(M,R^{t}D_I (N))).$$ As
$R^iD_I(N)$ is weakly Laskerian  for all  $i< t,$
$\Ext^i_R(M,R^{t-i}D_I(N))$ is also weakly Laskerian  for all  $1\leq i<
t.$ Since $E_{t+2}^{i,t-i}$ is a subquotient of $E_{2}^{i,t-i} =
\Ext^i_R(M,R^{t-i}D_I(N)),$  $E_{t+2}^{i,t-i}$ is weakly Laskerian
for all  $1\leq i< t.$ It follows that $\bigcup\limits_{i = 1}^t \Ass_R
(E_{t+2}^{i,t-i})$ is finite. Note that  $R^{i}D_I (N)\cong
H^{i+1}_I(N)$ for $i>0.$ Thus  $H^{i}_I(N)$ is weakly Laskerian  for
all  $i< t+1.$ By \cite[2.2]{brofin}, $\Ass_R(H^{t+1}_I(N))$ is
finite and then $\Ass_R(\Hom(M,R^{t}D_I (N)))$ is finite. The proof
is complete.
\end{proof}

\begin{corollary}\label{C:hqsupphh}
Let $M$ be a finitely generated $R$-module, $N$ an $R$-module and
$t$ a non-negative integer. If $\Supp_R(R^iD_I(N))$ is a finite set
for all $i\leq t,$ then $\Supp_R(R^tD_I(M,N))$ is also a finite set.
\end{corollary}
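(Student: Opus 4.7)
The plan is to deduce this immediately from part (iii) of Theorem \ref{spectraland}, which supplies the containment
$$\Supp_R(R^t D_I(M,N)) \subseteq \bigcup_{i=0}^{t} \Supp_R\bigl(\Ext_R^i(M, R^{t-i}D_I(N))\bigr).$$
Since a finite union of finite sets is finite, it suffices to show that each of the $t+1$ sets on the right is finite.

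For this I would use the standard fact that for a finitely generated module $M$ and any module $X$, localization commutes with $\Ext$, so $\Supp_R(\Ext_R^i(M,X)) \subseteq \Supp_R(X)$ for every $i \geq 0$. Applying this with $X = R^{t-i}D_I(N)$, we get
$$\Supp_R\bigl(\Ext_R^i(M, R^{t-i}D_I(N))\bigr) \subseteq \Supp_R\bigl(R^{t-i}D_I(N)\bigr).$$
As $i$ ranges over $0, 1, \ldots, t$, the index $t-i$ also ranges over $0, 1, \ldots, t$, so by the hypothesis each $\Supp_R(R^{t-i}D_I(N))$ is a finite set.

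Combining these two observations, the right-hand side of the containment from Theorem \ref{spectraland}(iii) is a finite union of finite sets, hence finite, which forces $\Supp_R(R^tD_I(M,N))$ to be finite. There is no real obstacle here; the whole point is that Theorem \ref{spectraland}(iii) reduces the support question for $R^tD_I(M,N)$ to a support question for $\Ext$-modules built from the $R^qD_I(N)$'s with $q \le t$, and the elementary $\Supp$-containment for $\Ext$ of a finitely generated first argument finishes the argument.
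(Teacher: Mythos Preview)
Your argument is correct and matches the paper's own proof essentially line for line: the paper also invokes Theorem~\ref{spectraland}(iii), then uses the containment $\Supp_R(\Ext_R^i(M,R^{t-i}D_I(N)))\subseteq \Supp_R(R^{t-i}D_I(N))$ and the hypothesis to conclude. The only difference is cosmetic---you spell out why that $\Supp$-containment holds, whereas the paper states it without comment.
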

\begin{proof}
It follows from \ref{spectraland} that \begin{align*} \Supp_R (R^t
D_I (M,N))& \subseteq \bigcup\limits_{i = 0}^t {\Supp_R (\Ext_R^i
(M,R^{t-i}D_I (N)))}\\
&\subseteq\bigcup\limits_{i = 0}^t {\Supp_R (R^{t-i}D_I
(N)).}\end{align*} By the hypothesis we have the conclusion.
\end{proof}

\end{document}